\newcommand{\st}{\mbox{st}}
\DeclareMathOperator{\red}{red}
\newcommand*\circled[2][1.6]{\tikz[baseline=(char.base)]{
    \node[shape=circle, draw, inner sep=0pt, 
        minimum height={\f@size*#1},] (char) {#2};}}
\DeclareMathOperator{\gr}{gr}
\renewcommand\S{\mathcal S}
\newcommand\C{\mathcal C}
\newcommand{\ostar}{\mathbin{\mathpalette\make@circled\star}}
\newcommand{\make@circled}[2]{%
  \ooalign{$\m@th#1\smallbigcirc{#1}$\cr\hidewidth$\m@th#1#2$\hidewidth\cr}%
}
\newcommand{\smallbigcirc}[1]{%
  \vcenter{\hbox{\scalebox{0.77778}{$\m@th#1\bigcirc$}}}%
}
\newtheorem{theorem}{Theorem}[section]
\newtheorem{lemma}[theorem]{Lemma}\newtheorem{observation}[theorem]{Observation}
\newtheorem{corollary}[theorem]{Corollary}
\theoremstyle{definition}
\newtheorem{definition}[theorem]{Definition}
\newtheorem{remark}[theorem]{Remark}
\newtheorem{example}[theorem]{Example}
\title{Binary operations on pattern-avoiding cycles}
\date{}
\author[1]{Kassie Archer}
\author[2]{Christina Graves}
\author[1]{Robert P. Laudone}
\affil[1]{{\small Department of Mathematics, United States Naval Academy, Annapolis, MD, 21402}}
\affil[2]{{\small Department of Mathematics, University of Texas at Tyler, Tyler, TX, 75799}}
\affil[ ]{{\small Email: karcher@usna.edu, cgraves@uttyler.edu, laudone@usna.edu }}
\begin{document}

\maketitle

\begin{abstract}
Suppose $c_n(\sigma)$ denotes the number of cyclic permutations in $\S_n$ that avoid a pattern $\sigma$. 
In this paper, we define partial groupoid structures on cyclic pattern-avoiding permutations that allow us  to build larger cyclic pattern-avoiding permutations from smaller ones. We use this structure to find recursive lower bounds on $c_n(\sigma)$. These bounds imply
 that $c_n(\sigma)$ has a growth rate of at least 3 for $\sigma\in\{231,312,321\}$ and a  growth rate of at least 2.6 for $\sigma\in\{123,132,213\}$. 
 In the process, we prove (and sometimes improve) a conjecture of B\'{o}na and Cory  that $c_n(\sigma)\geq 2 c_{n-1}(\sigma)$ for all $\sigma\in\S_3\setminus\{123\}$ and $n\geq 2.$ 
\end{abstract}

\noindent {\bf Keywords:} pattern avoidance, cyclic permutations, binary operations


\section{Introduction}\label{sec:intro}

Originally posed in 2007 by Richard Stanley, the question of enumerating permutations on $n$ elements that both avoid a pattern $\sigma$ and are composed of only a single $n$-cycle in its cycle decomposition remains open. Related results appear in \cite{AE14,BC2019},  which include enumeration of all cyclic permutations avoiding a pair of patterns with the exception of the pair $\{132, 213\},$ though bounds on this enumeration were found in \cite{H19}. Another related result appears in \cite{ET19}, in which the authors enumerate cyclic permutations that avoid a \textit{consecutive} 123 or 321 pattern.

In \cite[Conjecture 5.2]{BC2019}, B\'{o}na and Cory conjecture that for any $\sigma\in\S_3$, we will have 
\[
2c_{n-1}(\sigma)\leq c_n(\sigma)\leq 4c_{n-1}(\sigma)
\]
where $c_n(\sigma)$ is the number of cyclic permutations of length $n$ that avoid the pattern $\sigma.$ They prove the lower bound for $\sigma=321$, but leave the remaining cases open. 
As part of this paper, we prove (and sometimes improve) the lower bound for $\sigma\neq 123$ and provide an alternative lower bound for the case $\sigma=123$.
 To this end, we establish a partial groupoid structure (i.e., a partially-defined binary operation) on $\sigma$-avoiding cycles, allowing us to build larger permutations from smaller ones (or equivalently, decompose $\sigma$-avoiding cycles into smaller pieces). 
 A similar idea of using a binary operation on pattern-avoiding permutations was used in \cite{G01} to enumerate unimodal (i.e., $\{312,213\}$-avoiding) permutations with a given cycle structure. The authors of \cite{GLW24} similarly defined a partial groupoid structure on $\bigcup_{n\geq 1}\S_n(1324)$ to enumerate a subclass of $1324$-avoiding permutations with additional positional restrictions.

\begin{table}[htp]
    \centering
    \begin{tabular}{|c|c|c|c|c|c|}
    \hline
         $\sigma$& Operation & Definition&  Figure & $\gr(c_n(\sigma))$ & Theorem  \\ \hline
         312 & $\pi=\alpha * \beta$ & Definition~\ref{def:312-star} & Figure~\ref{fig:312 example}&  $\geq 3.04$& Theorem~\ref{thm:312} \\ \hline 
         321 & $\pi=\alpha \odot \beta$ & Definition~\ref{defn:321-combine} & Figure~\ref{fig:321 example}& $\geq 3.17$ & Theorem~\ref{thm:321} \\ \hline 
         123 & $\pi=\alpha \star \beta$ & Definition~\ref{defn:123} & Figure~\ref{fig:123 example}& $\geq 2.60$& Theorem~\ref{thm:123} \\ \hline 
         132 & $\pi=\alpha\ostar\beta$ &Definition~\ref{defn:132}&  Figure~\ref{fig:132 example}& $\geq 2.60$ & Theorem~\ref{thm:132} \\ \hline 
    \end{tabular}
    \caption{For each $\sigma\in\S_3$ (up to symmetry), we define an operation to build larger permutations in $\C_n(\sigma)$ from smaller ones, each demonstrated with a figure in the paper. Each of these operations gives us a lower bound for the growth rate of $c_n(\sigma)=|\C_n(\sigma)|.$}
    \label{tab:1}
\end{table}

We use this decomposition to obtain recursive lower bounds on the enumeration of these patterns, which gives us lower bounds for the growth rates of the sets of these permutations. 
The results from this paper are summarized in Table~\ref{tab:1}. For each $\sigma\in\S_n$, we define a (partial) binary operation on $\sigma$-avoiding cyclic permutations which allows us to build up larger permutations from smaller ones. We use values obtained from small values of $n$ to obtain a recursive lower bound, which in turn gives us a lower bound on the growth rate. We additionally discuss an alternative way to build larger 132-avoiding cycles from smaller ones by augmenting the Dyck path associated to the permutation. We end the paper with some discussion of the results and further directions of study.

\section{Background and Notation}

Let $\S_n$ denote the set of permutations on $[n]=\{1,2,\ldots, n\}$. We write a permutation in its one-line notation as $\pi_1\pi_2\ldots\pi_n$ where $\pi_i=\pi(i)$. We say a permutation $\pi\in \S_n$ \textit{contains} the pattern $\sigma \in \S_k$ if there are indices $i_1<i_2<\cdots < i_k$ so that $\pi_{i_1}\pi_{i_2}\ldots\pi_{i_k}$ is in the same relative order as $\sigma_1\sigma_2\ldots\sigma_k$. We say a permutation $\pi$ \textit{avoids} $\sigma$ if it does not contain it. For example, the permutation $\pi = 3516247$ contains the pattern $132$ since the subsequence $\pi_1\pi_4\pi_6 =364$ is in the same relative order as $132$, and $\pi$ avoids the pattern $321$ since it does not contain any subsequence of length 3 that is decreasing. We denote by $\S_n(\sigma)$ the set of permutations in $\S_n$ that avoid the pattern $\sigma$.  The number of permutations in $\S_n(\sigma)$ for $\sigma\in\S_3$ is well-known to be the $n$-th Catalan number (see for example, \cite{Bona}). 
However, the problem of enumerating those permutations in $\S_n(\sigma)$ that are cyclic remains elusive. 

We call a permutation $\pi\in \S_n$ a \textit{cyclic} permutation, or a \textit{cycle}, if it is composed of a single $n$-cycle when written in its cycle notation (i.e., as a product of disjoint cycles). We write a cycle in its cycle notation as $(1, c_2, c_3, \ldots,c_n)$ where $c_2=\pi_1$ and $c_{i+1}=\pi_{c_i}$ for $2\leq i<n$. For example the permutation $\pi = 4523716 = (1,4,3,2,5,7,6)$ is a cyclic permutation in $\S_7$. We denote by $\C_n$ the set of cyclic permutations on $[n]$ and we denote by $\C_n(\sigma)$ the set of permutations in $\S_n(\sigma)$ that are cyclic, i.e. $\C_n(\sigma) = \S_n(\sigma)\cap\C_n$. Throughout this paper, let let $c_n(\sigma) = |\C_n(\sigma)|$.




There are several symmetries of permutations. For a permutation $\pi =\pi_1\pi_2\ldots \pi_n\in\S_n$, we define
\begin{itemize}\itemsep0em
    \item the \textit{reverse} of  $\pi$ is a permutation $\pi^r$ with $\pi^r_i=\pi_{n+1-i}$, 
    \item the \textit{complement} of $\pi$ is a permutation $\pi^c$ with $\pi^c_i=n+1-\pi_{i}$, and
    \item the \textit{inverse} of $\pi$ is a permutation $\pi^i$ with $\pi^i_j=k$ exactly when $\pi_k=j.$
\end{itemize} 
For example, if $\pi = 4523716,$ then $\pi^r = 6173254,$ $\pi^c=4365172$, and $\pi^i=6341275.$
These operations can be combined; the \textit{reverse-complement} is $\pi^{rc}=(\pi^r)^c$ and the \textit{reverse-complement-inverse} is $\pi^{rci}=(\pi^{rc})^i$. For the previous example, $\pi^{rc}=2715634$ and $\pi^{rci}=3167452.$ We note that $\pi,$ $\pi^i$, $\pi^{rc}$, and $\pi^{rci}$ are all well-known to have the same cycle type, so in particular, $\pi$ is cyclic if and only if one of $\pi^i$, $\pi^{rc}$, and $\pi^{rci}$ is cyclic. One way to see this is from the symmetry of the diagram of the permutation. 

Plotting $(i,\pi_i)$ for each $i$, we obtain the \textit{diagram} of $\pi$. For example, the diagram of $\pi = 34658721$ can be found in the top right corner of Figure~\ref{fig:312 example}, together with its cycle structure. The symmetries described above all correspond to $D_8$ (dihedral) symmetries of this diagram. The inverse, reverse-complement, and reverse-complement-inverse correspond respectively to the reflection on the main diagonal ($y=x$), the $180^\circ$ rotation, and the reflection on the anti-diagonal, each of which will preserve the cycle structure.

Furthermore $\pi$ avoids $\sigma$ if and only if $\pi^s$ avoids $\sigma^s$ for some symmetry $s\in\{r,c,i, rc,rci\}$ or any other combination of symmetries. For this reason, we must have 
\[c_n(312)=c_n(231) \quad \text{ and } \quad c_n(132)=c_n(213)\]
since $231=312^i$ and $132=213^{rc}.$
Therefore, it is enough for us to only consider the four cases $\{123,132,312, 321\}$ in this paper.

Finally, given a subset $S\subseteq [n]$ with $|S|=k$ and a permutation $\pi$ of the numbers in $S$, we write $\red(\pi)$ to denote the permutation in $\S_k$ in the same relative order as $\pi$. For example $\red(739) = 213$ and $\red(924861) = 623541.$ We will often use Greek letters $\alpha$, $\beta$, $\gamma$, and $\delta$ to denote segments of a permutation. For example, if $\alpha=325$ and $\beta =674$, we may write $\pi=\alpha1\beta$ to denote the permutation $3251674.$ In this case $\red(\alpha)=213$ and $\red(1\beta)=1342.$

\section{312-avoiding cycles}

In this section we consider cyclic permutations that avoid the pattern 312.  Note that any permutation that avoids 312 must be of the form $\pi = \gamma_1\ldots \gamma_k 1 \delta_1 \ldots \delta_{n-k-1}$ where $\gamma_i<\delta_j$ for each $i,j$.  In other words, $\pi = \gamma\delta$ where $\gamma\in\S_{k+1}$ is a 312-avoiding permutation that ends in 1, and $\red(\delta)\in\S_{n-k-1}$  is also 312-avoiding. Notice that if $\delta$ is not the empty permutation, $\pi$ will have at least two cycles since all elements of $\{1,2,\ldots, k+1\}$ map to elements in $\{1,2,\ldots, k+1\}$. Therefore the following observation holds. 

\begin{observation}
If $\pi \in \C_n(312)$, then $\pi_n=1$. 
\end{observation}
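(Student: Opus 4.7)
The observation is essentially a restatement of the structural remark that immediately precedes it, so my plan is to make that remark fully rigorous. The key fact to leverage is that any $312$-avoiding permutation $\pi \in \S_n$ decomposes around the position of $1$: writing $1 = \pi_{k+1}$, the entries in positions $1, \ldots, k$ must all be smaller than every entry in positions $k+2, \ldots, n$ (otherwise one obtains a $312$ pattern involving some $\pi_i > \pi_j$ with $i \le k < k+1 < j$, together with the entry $1 = \pi_{k+1}$).

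From this decomposition, the first step is to observe that the set of values appearing in positions $\{1, 2, \ldots, k+1\}$ is exactly $\{1, 2, \ldots, k+1\}$. Consequently, $\pi$ maps the block $\{1, \ldots, k+1\}$ to itself as a set, and likewise it maps the block $\{k+2, \ldots, n\}$ to itself. Both blocks are invariant under $\pi$, so the cycle decomposition of $\pi$ splits as a disjoint union of cycles supported on $\{1, \ldots, k+1\}$ and cycles supported on $\{k+2, \ldots, n\}$.

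The second step is to use cyclicity. If $\pi \in \C_n(312)$, then $\pi$ consists of a single $n$-cycle, which cannot split across two nonempty invariant blocks. Hence one of the two blocks must be empty. Since $\{1, \ldots, k+1\}$ always contains at least the entry $1$ and is nonempty, we must have $\{k+2, \ldots, n\} = \emptyset$, i.e.\ $k+1 = n$. Equivalently, $\delta$ is empty, so $\pi_n = 1$.

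I do not expect any real obstacle: the only subtlety is to verify carefully the "values in positions $1,\ldots,k+1$ are exactly $\{1,\ldots,k+1\}$" claim, which follows from the inequality $\gamma_i < \delta_j$ in the $312$-avoidance decomposition together with the fact that $1$ sits in position $k+1$. Everything else is bookkeeping about cycles respecting invariant subsets.
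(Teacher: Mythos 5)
Your proof is correct and follows essentially the same route as the paper: decompose $\pi$ around the position of $1$, observe that the small and large values form invariant blocks, and conclude that cyclicity forces the large block to be empty. The paper states this argument informally in the paragraph preceding the observation; you have simply filled in the bookkeeping.
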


We obtain slightly more information about cyclic 312-avoiding permutations by considering the position of 2. If  $\pi \in \C_n(312)$ with $n\geq 2$, we can write $\pi = \alpha 2 \beta1 $ with $|\alpha|=k$, $\alpha_i\in [3,k+2]$ for all $1\leq i\leq k$, $|\beta|=n-k-2$, and $\beta_i \in [k+3,  n]$ for all $1\leq i\leq n-k-2$. In fact, if $\pi=\alpha 2 \beta 1 \in \C_n(312)$, then $\alpha21\in\S_{k+2}$ and the reduced permutation $\red(\beta 1)\in\S_{n-k-1}$ are both cyclic as seen in the next lemma. 

\begin{lemma} \label{lem:312Break2}
Let $n\geq 2$ and $\pi \in S_n$ with $\pi = \alpha 2\beta 1$. Then $\pi$ is cyclic and 312-avoiding if and only if the permutations $\alpha21$ and $\red(\beta1)$ are both cyclic and 312-avoiding.
\end{lemma}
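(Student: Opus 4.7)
The plan is to prove the two directions by treating pattern-avoidance and cyclicity separately, since they rely on different kinds of arguments.

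\emph{Pattern avoidance.} The permutation $\sigma=\alpha 21$ appears as the pattern of $\pi$ at the positions $\{1,\ldots,k+1,n\}$ (the values there are exactly $\{1,2,\ldots,k+2\}$, so no reduction is needed), and $\tau=\red(\beta 1)$ is the pattern at positions $\{k+2,\ldots,n\}$. Hence any $312$ occurring in $\sigma$ or $\tau$ lifts to a $312$ in $\pi$; this yields the forward direction. For the converse, suppose $\pi$ has a $312$ pattern at positions $i_1<i_2<i_3$, so that $\pi_{i_2}<\pi_{i_3}<\pi_{i_1}$. I would case on the ``block'' of $\pi_{i_1}$: if $\pi_{i_1}$ is in $\beta$ (so $\pi_{i_1}\ge k+3$ and $i_1\ge k+2$), then $i_2,i_3>i_1\ge k+2$ and all three indices sit in the $\beta 1$ block, contradicting $\tau$ being $312$-avoiding; if $\pi_{i_1}$ is in $\alpha$ (so $\pi_{i_1}\le k+2$), then $\pi_{i_3}<k+2$ so $\pi_{i_3}\notin\beta$, which together with $i_3>i_1$ and the value constraints forces $i_3\in\{1,\ldots,k+1,n\}$ (and likewise for $i_2$), placing the pattern inside $\sigma$; the small cases $\pi_{i_1}\in\{1,2\}$ are ruled out because $\pi_{i_1}$ must be the largest of the three entries.

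\emph{Cyclicity.} Let $\tilde\tau$ be the shifted copy of $\tau$ on the set $\{k+2,k+3,\ldots,n\}$, defined by $\tilde\tau(i)=\pi(i)$ for $k+2\le i\le n-1$ and $\tilde\tau(n)=k+2$; this has the same cycle type as $\tau$. Then $\pi$ agrees with $\sigma$ on $\{1,\ldots,k+1\}$ and with $\tilde\tau$ on $\{k+2,\ldots,n-1\}$, and the only discrepancies are $\sigma(k+2)=1$ versus $\pi(k+2)=\beta_1$, and $\tilde\tau(n)=k+2$ versus $\pi(n)=1$. I would then establish two facts: (a) every cycle of $\pi$ not containing $1$ lies entirely in $\{2,\ldots,k+1\}$ or entirely in $\{k+3,\ldots,n-1\}$, because from a position in $\{1,\ldots,k+1\}$ the map $\pi$ lands in $\{2,\ldots,k+2\}$ (which can leave the left block only via $k+2$), and from $\{k+3,\ldots,n-1\}$ it lands in $\{k+3,\ldots,n\}$ (which can leave only via $n$); moreover on such a cycle $\pi$ coincides with $\sigma$ or $\tilde\tau$, giving a bijection with the non-$1$ cycles of $\sigma$ and the non-$(k+2)$ cycles of $\tilde\tau$. (b) The cycle of $1$ in $\pi$ must contain $1,k+2,\beta_1$, and $n$: since $\pi(n)=1$, the cycle includes $n$; tracing backward, $\pi^{-1}$ applied to elements of $\{k+3,\ldots,n\}$ stays in $\{k+2,\ldots,n-1\}$, so the backward orbit eventually reaches $k+2$, whose image is $\beta_1$.

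Combining (a) and (b) gives the cycle count
\[
\#\text{cycles}(\pi) = \#\text{cycles}(\sigma) + \#\text{cycles}(\tau) - 1,
\]
so $\pi$ is a single $n$-cycle if and only if both $\sigma$ and $\tau$ are single cycles. Together with the pattern-avoidance argument above, this proves the lemma.

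\emph{Main obstacle.} The trickiest step is the cycle-fusion observation (b): showing that the cycle of $1$ in $\pi$ necessarily passes through both exceptional positions $k+2$ and $n$, thereby gluing the ``left'' and ``right'' pieces into one cycle, while non-$1$ cycles cannot straddle the blocks. The bookkeeping is slightly delicate because of the role played by $\beta_1$ and the shift identifying $\tau$ with $\tilde\tau$, but once the single L-to-R transition (at $k+2$) and the single R-to-L transition (at $n$) are isolated, the cycle count follows cleanly.
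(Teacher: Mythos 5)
Your proof is correct, and it takes a genuinely different route from the paper's. The paper argues directly in cycle notation: since $\pi$ avoids $312$, the values in positions $[1,k+1]$ lie in $[2,k+2]$ and those in $[k+2,n-1]$ lie in $[k+3,n]$, so the cycle of $\pi$ must have the form $(1,a_1,\ldots,a_{k+1},b_1,\ldots,b_{n-k-2})$ with $a_{k+1}=k+2$ and $b_{n-k-2}=n$; the paper then reads off $\alpha 21=(1,a_1,\ldots,a_{k+1})$ and $\red(\beta 1)=(1,b_1-k-1,\ldots,b_{n-k-2}-k-1)$ as the two sub-cycles, and for the converse simply concatenates the two cycle words back together. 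You instead isolate the two exceptional transitions (the jump from $k+2$ into the upper block and the jump $n\mapsto 1$ back to the lower block) and derive the cycle-count identity $\#\mathrm{cyc}(\pi)=\#\mathrm{cyc}(\alpha 21)+\#\mathrm{cyc}(\red(\beta 1))-1$, from which both directions of the cyclicity equivalence follow at once. Your cycle-count approach is a bit longer to set up but more structural — it would give information even when the two pieces are not single cycles — whereas the paper's explicit cycle-word concatenation is shorter and more concrete. You also make the pattern-avoidance equivalence fully explicit (the case split on which block contains $\pi_{i_1}$ is a clean way to handle it), whereas the paper treats this part as essentially immediate from the block structure. Both approaches are sound.
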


\begin{proof}
First suppose $\pi = \alpha2\beta1 \in \C_n(312)$ with $|\alpha|=k$. Since $\pi$ avoids 312, we have $\pi_i\in[2,k+2]$ for all $i\in[1,k+1]$ and $\pi_j\in[k+3,n]$ for all $j\in[k+2,n-1]$. Since all small elements map to small elements and large elements to large elements, these form blocks in the cycle form. In particular, we can write 
\[ \pi = (1, a_1, \ldots, a_{k+1}, b_1, \ldots, b_{n-k-2})\] with $a_i\in[2,k+1]$ for $i\in[1,k]$, $a_{k+1}=k+2,$ $b_j\in[k+3,n-1]$ for $j\in[1,n-k-3],$ and $b_{n-k-2}=n$. 
Thus $\alpha 21 =(1, a_1, \ldots a_{k+1})\in\S_{k+2}$ is cyclic, avoids $312$, and has the property that $\pi_k=2$, and similarly, $\st(\beta 1) = (1,b_1 - k-1, b_2-k-1, \cdots, b_{n-k-2}-k-1)\in\S_{n-k-1}$ is cyclic and avoids 312.

Conversely, suppose $\alpha 21\in\S_{k+2}$ and $\beta 1 \in\S_{n-k-1}$ are both 312-avoiding cyclic permutations with $\alpha21 = (1,a_1, \ldots a_{k+1})$ and $\beta1 = (1, b_1, \ldots b_{n-k-2})$. To see that $\pi = \alpha2\beta1$ is cyclic, we note that the smallest $i$ so that $\pi_i > k+2$ is $i = k+2$. Thus $\pi = (1, a_1, \ldots, a_{k+1}, b_1, \ldots, b_{n-k-2}).$
\end{proof}

\begin{example}
    If we have
    \[\pi=\alpha2\beta1=4\ 3\ 6\ 5\ 2\ 10\ 9\ 11\ 8\ 7\ 13\ 14\ 12\ 1 = (1, 4, 5, 2, 3, 6, 10, 7, 9, 8, 11, 13, 12, 14)\in\S_{14},\]
    then $\alpha21=436521=(1, 4, 5, 2, 3, 6)\in\S_6$ and $\red(\beta1) = 546328971=(1, 5, 2, 4, 3, 6, 8, 7, 9)\in\S_9$ are both cyclic, 312-avoiding permutations.
\end{example}

Though this process gives us a way of building permutations in $\C_n(321)$ from smaller ones, we will give a variation of this process via a binary operator $*.$

Let us define the set $X_{1,n}$ in the following way (the reason for this notation will become clear later):
\[ X_{1,n} = \{2\beta: \beta \in \C_{n-1}(312)\}.\]
Note that by Lemma~\ref{lem:312Break2}, these permutation are cyclic and 312-avoiding. In particular, this is exactly the set of permutations in $\C_{n}(312)$ that begin with 2. As a consequence of Lemma~\ref{lem:312Break2}, we have that $|X_{1,n}| = c_{n-1}(312)$. Since $X_{1,n} \subseteq C_{n}(312),$ we thus have $c_n(312) \geq c_{n-1}(312)$. We will improve this bound greatly by defining similar sets that have no intersection with $X_{1,n}$. 

Note that the permutation $3421$ is in $\C_4(312)$ and is not in the set $X_{1, 4}$. It turns out that inserting $34$ to the front of any 312-avoiding cyclic permutation (and shifting elements appropriately) creates a new 312-avoiding cyclic permutation. In fact, in the next definition and lemma, we will see that if $\alpha$ is any cyclic 312-avoiding permutation of length $m$ that ends in 21, we can insert the first $m-2$ elements of $\alpha$ to the front of any cyclic $312$-avoiding permutation $\beta$ and the resulting permutation will be cyclic and 312-avoiding.




\begin{definition}\label{def:312-star}
Let $\alpha = \alpha_1\alpha_2\cdots\alpha_{m-2}21 \in \C_m(312)$ and $\beta = \beta_1\beta_2 \cdots \beta_{n-1}1 \in \C_n(312)$ for $m,n \geq 2.$  Define the permutation $\alpha * \beta \in S_{m+n-2}$ by
\[ \alpha * \beta = \alpha_1\alpha_2\cdots \alpha_{m-2}\bar{\beta}_1\bar{\beta}_2\cdots \overline{\beta}_{n-1}1\]
where $\bar{\beta}_i = \beta_i + m-2$ for all $i$ with $\beta_i\neq 2$ and $\bar{\beta}_i =2$ if $\beta_i=2$.
\end{definition}

The operation $*$ can be thought of as starting with a permutation $\alpha$ that ends in 21, and inserting all of the elements greater than 2 from $\alpha$ to the front of $\beta$. 

\begin{example}\label{ex:star}
Let $\alpha = 34658721 = (1,3,6,7,2,4,5,8)$ and $\beta = 342561 = (1, 3, 2, 4, 5, 6)$. Then
\[ \alpha * \beta = \fbox{3}\ \fbox{4}\ \fbox{6} \ \fbox{5} \ \fbox{8} \ \fbox{7}\  9\ 10 \ 2 \ 11\ 12\ 1 = (1, 3, 6, 7, 9, 2, 4, 5, 8, 10, 11, 12).\]
A graphical depiction of this example can be found in Figure~\ref{fig:312 example}.
\end{example}

\begin{figure}[ht]
\centering
\begin{tabular}{c}
$\alpha = 34658721$ \\
\begin{tabular}{c}
\begin{tikzpicture}[scale=.5]
\newcommand\myx[1][(0,0)]{\pic at #1 {myx};}
\tikzset{myx/.pic = {\draw 
(-2.5mm,-2.5mm) -- (2.5mm,2.5mm)
(-2.5mm,2.5mm) -- (2.5mm,-2.5mm);}}
\draw[gray] (0,0) grid (8,8);
\foreach \x/\y in {
1/3,
2/4,
3/6,
4/5,
5/8,
6/7,
7/2,
8/1
}
\myx[(\x-.5,\y-.5)];
\draw[-, ultra thick,green!70!black] (.5,.5)--(.5,2.50000000000000)
--(2.50000000000000,2.50000000000000)--(2.50000000000000,5.50000000000000)
--(5.50000000000000,5.50000000000000)--(5.50000000000000,6.50000000000000)
--(6.50000000000000,6.50000000000000)--(6.50000000000000,1.50000000000000)
--(1.50000000000000,1.50000000000000)--(1.50000000000000,3.50000000000000)
--(3.50000000000000,3.50000000000000)--(3.50000000000000,4.50000000000000)
--(4.50000000000000,4.50000000000000)--(4.50000000000000,7.50000000000000)
--(7.50000000000000,7.50000000000000)--(7.50000000000000,0.500000000000000)
--(0.500000000000000,.5)--(.5,.5);
\end{tikzpicture} \end{tabular} \\[2cm]  $\beta = 342561$ \\ \begin{tabular}{c}
\begin{tikzpicture}[scale=.5]
\newcommand\myx[1][(0,0)]{\pic at #1 {myx};}
\tikzset{myx/.pic = {\draw 
(-2.5mm,-2.5mm) -- (2.5mm,2.5mm)
(-2.5mm,2.5mm) -- (2.5mm,-2.5mm);}}
\draw[gray] (0,0) grid (6,6);
\foreach \x/\y in {
1/3,
2/4,
3/2,
4/5,
5/6,
6/1
}
\myx[(\x-.5,\y-.5)];
\draw[-, ultra thick,red!70!black] (.5,.5)--(.5,2.5)
--(2.5,2.5)--(2.5,1.5)
--(1.5,1.5)--(1.5,3.5)
--(3.50000000000000,3.50000000000000)--(3.50000000000000,4.50000000000000)
--(4.50000000000000,4.50000000000000)--(4.50000000000000,5.50000000000000)
--(5.50000000000000,5.50000000000000)--(5.50000000000000,.50000000000000)
--(0.500000000000000,.5)--(.5,.5);
\end{tikzpicture}
\end{tabular}
\end{tabular}
$\longrightarrow$
\begin{tabular}{c}
$\alpha * \beta = 3 \ 4\  6\  5\  8\   7\   9\  10\ 2\ 11\ 12\ 1$ \\
\begin{tikzpicture}[scale=.5]
\newcommand\myx[1][(0,0)]{\pic at #1 {myx};}
\tikzset{myx/.pic = {\draw 
(-2.5mm,-2.5mm) -- (2.5mm,2.5mm)
(-2.5mm,2.5mm) -- (2.5mm,-2.5mm);}}
\draw[gray] (0,0) grid (12,12);
\foreach \x/\y in {
1/3,
2/4,
3/6,
4/5,
5/8,
6/7,
7/9,
8/10,
9/2,
10/11,
11/12,
12/1
}
\myx[(\x-.5,\y-.5)];
\draw[-, ultra thick,green!70!black] (7.5,.5)--(.5,.5)--(.5,2.50000000000000)
--(2.50000000000000,2.50000000000000)--(2.50000000000000,5.50000000000000)
--(5.50000000000000,5.50000000000000)--(5.50000000000000,6.50000000000000)
--(6.50000000000000,6.50000000000000);
\draw[-, ultra thick,red!70!black] (6.50000000000000,6.50000000000000) -- 
(6.50000000000000,8.50000000000000) -- (8.5, 8.5) -- (8.5, 6.5);
\draw[-, ultra thick,green!70!black] (8.5, 6.5) -- (8.5, 6.5);
\draw[-, ultra thick,green!70!black] (8.5, 6.5) -- (8.5, 1.5);
\draw[-, ultra thick,green!70!black] (7.5, 1.5) -- (6.5, 1.5);
\draw[-, ultra thick,green!70!black]  (7.5, 1.5) -- (8.5, 1.5);
\draw[-, ultra thick,green!70!black] (6.5, 1.5)--(1.5, 1.5)--(1.5, 3.5)--(3.5, 3.5) -- (3.5, 4.5) -- (4.5, 4.5) -- (4.5, 7.5) -- (7.5, 7.5);
\draw[-, ultra thick,red!70!black] (7.5, 7.5)--(7.5, 9.5) -- (9.5, 9.5) -- (9.5, 10.5) -- (10.5, 10.5) -- (10.5, 11.5) -- (11.5, 11.5) -- (11.5, 6.5);
\draw[-, ultra thick,green!70!black] (7.5,.5)--(11.5,.5)--(11.5,6.5)
;
\end{tikzpicture} 
\end{tabular}
\caption{This example demonstrates the operation defined in Definition~\ref{def:312-star}. Here, $\alpha$ ends in 21. Take all the elements before that 21 and append them to the front of $\beta$ to obtain $\alpha*\beta.$ The result is still cyclic and 312-avoiding.}
\label{fig:312 example}
\end{figure}

\begin{lemma}\label{lem:312oplus}
Let $m,n\geq 2$ and let 
 $\alpha \in \C_{m}(312)$ with $\alpha_{m-1}=2$ and $\beta \in \C_{n}(312)$. Then we have $\alpha * \beta \in \C_{n+m-2}(312)$.
\end{lemma}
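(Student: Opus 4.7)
The plan is to verify that $\pi := \alpha * \beta$ is both cyclic and 312-avoiding. Throughout I will exploit that $\pi$ cleanly splits into two position-blocks with nearly disjoint value ranges: positions $\{1,\ldots,m-2\}$ carry values in $\{3,\ldots,m\}$ (where $\pi(i) = \alpha(i)$), while positions $\{m-1,\ldots,m+n-2\}$ carry values in $\{1,2\}\cup\{m+1,\ldots,m+n-2\}$. Under the shift $\sigma(j) := j + m - 2$, which identifies $\beta$'s index set $\{1,\ldots,n\}$ with this second block of positions, $\pi$ will behave like a shifted copy of $\beta$ with only a small amount of ``rewiring.''

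For 312-avoidance I would suppose for contradiction that $\pi_i\pi_j\pi_k$ is a 312 pattern with $i<j<k$, and do a case analysis based on where the indices lie relative to $m-2$. If $k \leq m-2$ the pattern lives inside $\alpha$, contradicting $\alpha \in \C_m(312)$. If $i \geq m-1$ the pattern descends to a 312 pattern in $\beta$ via $\sigma^{-1}$ (which preserves relative order on the $\bar\beta$-values), contradicting $\beta \in \C_n(312)$. In the mixed case $i \leq m-2 < k$, we have $\pi_i \in \{3,\ldots,m\}$, forcing $\pi_k < \pi_i \leq m$; since $\pi_k$ lies in $\{1,2\}\cup\{m+1,\ldots,m+n-2\}$ this gives $\pi_k \in \{1,2\}$, and then $\pi_j < \pi_k \leq 2$ forces $\pi_j = 1$. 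But 1 occurs only at the final position $m+n-2 \geq k$, contradicting $j < k$.

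For cyclicity my plan is to express $\pi$'s cycle as a concatenation of four arcs cut from the cycles of $\alpha$ and $\beta$. Applying Lemma~\ref{lem:312Break2} to each of $\alpha$ and $\beta$ shows that $\alpha$'s cycle has the form $1 \to \cdots \to (m-1) \to 2 \to \cdots \to m \to 1$ and $\beta$'s cycle has the form $1 \to \cdots \to p \to 2 \to \cdots \to n \to 1$, where $p$ is the position of 2 in $\beta$. Deleting the two distinguished edges in each cycle produces arcs $A_1\colon 1\to\cdots\to m-1$ and $A_2\colon 2\to\cdots\to m$ for $\alpha$, and $B_1\colon 1\to\cdots\to p$ and $B_2\colon 2\to\cdots\to n$ for $\beta$. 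A direct check of the definition shows that $\pi(\sigma(j)) = \sigma(\beta(j))$ for all $j \in \{1,\ldots,n\}$ except $j = p$ (where $\pi(\sigma(p)) = 2$ instead of $\sigma(2)=m$) and $j = n$ (where $\pi(\sigma(n)) = 1$ instead of $\sigma(1)=m-1$). Combined with $\pi(i) = \alpha(i)$ for $i \in \{1,\ldots,m-2\}$, this forces $\pi$'s orbit of 1 to trace $A_1$, slip into $\sigma(B_1)$ at the shared vertex $m-1 = \sigma(1)$, drop back into $A_2$ at the exception $\sigma(p)\to 2$, traverse $A_2$ to $m = \sigma(2)$, enter $\sigma(B_2)$, and finally return to 1 via $\sigma(n)\to 1$.

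The main obstacle is the bookkeeping confirming these four arcs are pairwise disjoint except at the two gluing vertices $m-1$ and $m$, so that the concatenation is a single cycle of length $m+n-2$. This reduces to noting that $\{A_1, A_2\}$ partitions $\{1,\ldots,m\}$ and $\{\sigma(B_1), \sigma(B_2)\}$ partitions $\{m-1, m, \ldots, m+n-2\}$, so the two overlap exactly at $\{m-1, m\}$ and the concatenation visits $m + n - 2$ distinct vertices, matching the length of $\pi$. A handful of degenerate small cases ($m=2$, $n=2$, or $p=1$, which collapses $B_1$ to the single vertex $\{1\}$) should be flagged and verified directly, but each is immediate from the definition.
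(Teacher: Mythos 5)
Your proof is correct and takes essentially the same approach as the paper: expressing the cycle of $\alpha * \beta$ as a concatenation of arcs cut from the cycles of $\alpha$ and $\beta$, glued at the vertices $m-1$ and $m$. The only notable difference is that you handle general $\beta$ directly by tracking two exceptional indices, whereas the paper first reduces to the special case $\beta_{n-1}=2$ via Lemma~\ref{lem:312Break2} before writing out the cycle; you also spell out the $312$-avoidance case analysis that the paper dismisses as clear.
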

\begin{proof}
    It is clear from the definition that the permutation $\pi = \alpha*\beta$ will be 312-avoiding, so we need only show it is cyclic.
    Note that by Lemma~\ref{lem:312Break2}, if $\beta_i=2$ for any $i\leq n-1$, then we can write $\beta=\beta_12\beta_21$, then if $\gamma21=\alpha*\beta_121$, then $\alpha*\beta=\gamma2\beta_21.$ Thus, by Lemma~\ref{lem:312Break2}, it is enough to show this lemma is true for $\beta\in\C_n$ with $\beta_{n-1}=2.$ 

    Now, since we have $\alpha=(1,a_2,\ldots,a_r, m-1,2,a_{r+2},\ldots,a_{n-1}, m)$ and $\beta=(1,b_2,\ldots, b_{s},n-1,2,b_{s+2},\ldots, b_{n-1},n)$, we will have 
    \[
    \alpha*\beta=(1,a_2,\ldots,a_r,m-1,\bar{b}_2,\ldots, \bar{b}_{s},n-1,2,a_{r+2},\ldots,a_{n-1}, m,\bar{b}_{s+2},\ldots, \bar{b}_{n-1},n)
    \]
    where $\bar{b}_j=b_j+ m-2$. This works because in $\pi=\alpha*\beta$, we have that $\pi_{m-1}=\beta_1$, $\pi_{m}=\beta_2$ and that the $\alpha$ values and positions are all the same except for $\alpha_{m-1}$ and $\alpha_m,$ while all the $\beta$ values and positions have been shifted by $m-2$, except for $\beta_{n-1}$ and $\beta_n$.
\end{proof}

Note that $\pi=3421 * \beta$ will always be a permutation that starts with $\pi_1\pi_2=34$. Let us define
\[ X_{2,n} = \{3421 * \beta : \beta \in C_{n-2}(312)\}.\] It is clear that $|X_{2,n}| = c_{n-2}$ and that $X_{1,n} \cap X_{2,n} = \emptyset.$ In order to build all permutations in $C_n(312),$ we now state a  lemma generalizing this idea, which will allow us to find a recursive lower bound on $c_n(312).$

\begin{lemma}\label{lem:312-Ak} Let
\[ X_{1,n} = \{ 2\beta: \beta \in \C_{n-1}(312)\}.\]
For $k \geq 2,$ let
\[ A_k = \{\alpha \in \C_{k+2}(312): \alpha_{k+1} = 2, \alpha \notin X_{1, k+2} \cup X_{2,k+2} \cup \cdots \cup X_{k+1,k+2}\}\]
and let
 \[ X_{k,n} = \{\alpha * \beta: \alpha \in A_k, \beta \in \C_{n-k}(312)\}.\]
 Then, $|X_{k,n}| = |A_k|c_{n-k}(312)$ for $k \geq 2.$
\end{lemma}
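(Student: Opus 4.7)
The plan is to exhibit a bijection
\[
\phi\colon A_k \times \C_{n-k}(312) \to X_{k,n}, \qquad \phi(\alpha,\beta) = \alpha * \beta,
\]
from which the desired equality $|X_{k,n}| = |A_k| \cdot c_{n-k}(312)$ follows immediately. Surjectivity is built into the definition of $X_{k,n}$, and well-definedness follows from Lemma~\ref{lem:312oplus}: any $\alpha \in A_k$ has length $k+2$ with $\alpha_{k+1}=2$ (and $\alpha_{k+2}=1$ by the observation that $\pi_n=1$ for $\pi \in \C_n(312)$), so Lemma~\ref{lem:312oplus} yields $\alpha * \beta \in \C_{(k+2)+(n-k)-2}(312) = \C_n(312)$. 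All the content is in injectivity.

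For injectivity, I would suppose $\alpha*\beta = \alpha'*\beta'$ with $\alpha,\alpha' \in A_k$ and $\beta,\beta' \in \C_{n-k}(312)$. Both $\alpha$ and $\alpha'$ have length $k+2$ and end in $21$, so Definition~\ref{def:312-star} puts $\alpha_1,\ldots,\alpha_k$ (respectively $\alpha'_1,\ldots,\alpha'_k$) in the first $k$ positions of the output. Equating these positions gives $\alpha_i = \alpha'_i$ for $i \le k$, and hence $\alpha=\alpha'$. The remaining entries of $\alpha*\beta$ are $\bar\beta_1,\ldots,\bar\beta_{n-k-1},1$, where the assignment $\beta_i \mapsto \bar\beta_i$ fixes $2$ and shifts every other entry of $\beta$ by $+k$. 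This map is manifestly invertible (subtract $k$ from every entry above $2$, leave $2$ alone), so $\beta$ is uniquely recoverable from the tail of $\alpha*\beta$. The same reconstruction applied to $\alpha'*\beta'$ forces $\beta=\beta'$.

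There is no serious obstacle here — the lemma is a clean bijective counting statement that relies only on the structure of $*$, which separates the contributions of $\alpha$ and $\beta$ into disjoint blocks of output positions with a reversible shift on the $\beta$-block. The more delicate disjointness condition among different $X_{j,n}$'s is not needed for this lemma: it is pre-built into the definition of $A_k$ and will only come into play when the sizes $|X_{k,n}|$ are summed to yield a recursive lower bound on $c_n(312)$.
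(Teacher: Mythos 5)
Your proof is correct and takes essentially the same approach as the paper: both argue that the $*$ operation is invertible because $\alpha$ can be read off the first $k$ entries (appending $21$) and $\beta$ recovered from the tail by reversing the shift. The paper states this in one line; you have simply spelled out the well-definedness (via Lemma~\ref{lem:312oplus}) and injectivity in more detail.
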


\begin{proof}
This is clear since we can undo the $*$ operator in this case and obtain one permutation in $A_k$ and one in $\C_{n-k}(312)$. Indeed, for any $\pi=\alpha*\beta\in X_{k,n},$  we know $\alpha=\pi_1\ldots\pi_k21$ and $\beta = \red(\pi_{k+1}\ldots \pi_n)$.
\end{proof}

The sets $A_k$ for the first few values of $k$ are \[A_2 = \{3421\}, \quad A_3 = \{43521, 35421\}, \quad A_4 = \{436521, 546321, 465321\}.\]  Notice $345621 \notin A_4$ even though $345621 \in \C_6(312)$ because  $3421 \in A_2$, and thus $345621\in X_{2,6}$ since $345621=3421*3421$. Since by construction the sets $|X_{k,n}|$ have no intersection, the sum of the sizes of these sets give a lower bound on $c_n(312)$. Using just the sets $X_{1, n}, X_{2,n}, X_{3, n},$ and $X_{4,n}$, we have
\[ c_n(312) \geq \left|\bigcup_{i=1}^{4} X_{i,n} \right|  = c_{n-1}(312) + c_{n-2}(312) + 2c_{n-3}(312) + 3c_{n-4}(312),\] which implies $c_n(312)$ has a growth rate of at least 2.1746. Using a computer algebra system \cite{Sage}, we can compute values of $|A_k|$   to achieve a growth rate for $c_n(312)$ of slightly more than 3. The first 18 values of $|A_k|$ are found in Table~\ref{312ATable}.

\begin{table}
\centering
%
\begin{tabular}{c|p{1.5cm}||c|p{1.5cm}||c|p{1.5cm}}
    $k$ & $|A_k|$ &  $k$ & $|A_k|$ &  $k$ & $|A_k|$ \\ \hline\hline
    1 & 1 & 7 & 64 & 13 & 59,932 \\ 
    2 & 1 & 8 & 175 & 14 & 201,544 \\
    3& 2 & 9 & 526 & 15 & 685,550 \\
    4 & 3 & 10 & 1726 & 16 & 2,350,957 \\
    5 & 6 & 11 & 5612 & 17 & 8,108,124 \\ 
    6 & 22 & 12 & 18,231 & 18 & 28,174,500
\end{tabular}
\caption{Values of $|A_k|$, as defined in Lemma~\ref{lem:312-Ak}, for $k \in [18]$. These are exactly those permutations in $\C_{k+2}(312)$ that end with $21$ and cannot be obtained by $\alpha*\beta$ for some smaller $\alpha$ and $\beta$.} \label{312ATable}
\end{table}




\begin{theorem} \label{thm:312} Let $n \geq 19$ and let $c_n = |\C_n(312)|.$ Then
\begin{align*}
    c_n & \geq c_{n-1} + c_{n-2} + 2c_{n-3} + 3c_{n-4}  +6c_{n-5} + 22c_{n-6} + 64c_{n-7} + 175c_{n-8} + 526c_{n-9} \\
& \ \ \ \ + 1726c_{n-10}+5612c_{n-11} + 18231c_{n-12}+59932c_{n-13} +201544c_{n-14}\\
& \ \ \ \ +685550c_{n-15} + 2350957c_{n-16} + 8108124c_{n-17} + 28174500c_{n-18}
\end{align*}
which implies that $c_n(312)$ has a growth rate greater than 3.04385969.
\end{theorem}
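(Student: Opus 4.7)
The proof has two steps: establishing the recursive inequality, and then extracting the growth-rate bound. The inequality follows by summing $|X_{k,n}| = |A_k| c_{n-k}(312)$ for $k = 1, \ldots, 18$, using $|X_{1,n}| = c_{n-1}(312)$ directly and Lemma~\ref{lem:312-Ak} for $k \geq 2$, together with the numerical values from Table~\ref{312ATable}. The only substantive point to verify is that the sets $X_{1,n}, X_{2,n}, \ldots, X_{18,n}$ are pairwise disjoint inside $\C_n(312)$.

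For disjointness, I would argue as follows. Any $\pi \in X_{1,n}$ begins with $\pi_1 = 2$, whereas any $\pi = \alpha * \beta \in X_{k,n}$ with $k \geq 2$ begins with $\pi_1 = \alpha_1 \geq 3$ (since $\alpha \in A_k \subseteq \C_{k+2}(312)$ ends in $\ldots 21$), so $X_{1,n} \cap X_{k,n} = \emptyset$ for $k \geq 2$. If instead $\pi \in X_{k,n} \cap X_{k',n}$ with $2 \leq k < k' \leq 18$, then $\pi$ has two decompositions $\pi = \alpha * \beta = \alpha' * \beta'$ with $\alpha \in A_k$ of length $k+2$ and $\alpha' \in A_{k'}$ of length $k'+2$. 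Unwinding the definition of $*$ (the first $k$ entries of $\alpha'$ agree with $\pi_1,\ldots,\pi_k$, which in turn agree with the first $k$ entries of $\alpha$), one checks directly that $\alpha' = \alpha * \gamma$ for some $\gamma \in \C_{k'-k+2}(312)$, which places $\alpha' \in X_{k, k'+2}$ and contradicts the defining exclusion clause in $A_{k'}$. Summing $|X_{k,n}|$ disjointly over $k = 1,\ldots,18$ then gives the claimed inequality.

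For the growth-rate bound, I would define an auxiliary sequence $(d_n)$ by $d_n = c_n(312)$ for $1 \leq n \leq 18$ and, for $n \geq 19$, by the analogous linear recurrence with equality replacing the inequality. A routine induction using the first part shows $d_n \leq c_n(312)$ for all $n$, so the growth rate of $c_n(312)$ is at least that of $(d_n)$. The characteristic polynomial
\[ p(x) = x^{18} - x^{17} - x^{16} - 2x^{15} - 3x^{14} - 6x^{13} - 22x^{12} - 64x^{11} - 175x^{10} - \cdots - 28174500 \]
has exactly one sign change in its coefficient sequence, so Descartes' rule of signs gives a unique positive real root $\rho$, and standard linear-recurrence theory (dominant root analysis of the rational generating function of $(d_n)$) yields $d_n = \Theta(\rho^n)$. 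The main obstacle is the final numerical step: verify $p(3.04385969) < 0$ by exact rational arithmetic, so that since $p(x) \to +\infty$ as $x \to \infty$, the intermediate value theorem gives $\rho > 3.04385969$, completing the proof.
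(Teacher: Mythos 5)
Your proof is correct and follows essentially the same route as the paper's: it combines the count $|X_{k,n}| = |A_k|\,c_{n-k}$ from Lemma~\ref{lem:312-Ak} with disjointness of the $X_{k,n}$ and the tabulated values of $|A_k|$, then extracts the growth rate from the unique positive root of the associated characteristic polynomial. The one place you are actually more explicit than the paper (which only says the $X_{k,n}$ are ``disjoint by construction'') is your argument that two decompositions $\pi = \alpha*\beta = \alpha'*\beta'$ with $\alpha\in A_k$, $\alpha'\in A_{k'}$, $k<k'$, force $\alpha' = \alpha*\gamma$ with $\gamma\in\C_{k'-k+2}(312)$; the phrase ``one checks directly'' is hiding the nontrivial point that the recovered $\gamma = \red(\alpha'_{k+1}\cdots\alpha'_{k'+2})$ is \emph{cyclic} (i.e.\ a converse to Lemma~\ref{lem:312oplus}), which does hold but requires tracing the cycle form of $\alpha'$ and using that $\alpha$ is cyclic to rule out premature closure.
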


\begin{proof} We know that $c_n \geq \sum_{k=1}^{18}|X_{k,n}|$ since the sets $X_{k,n}$ are disjoint by construction. Furthermore, we have that $|X_{k,n}| = |A_{k}|c_{n-k}$ by Lemma~\ref{lem:312-Ak}. By computing the first 18 values of $|A_k|$ we have our result.
\end{proof}

We end this section with a short discussion about other observations. 
First, let us note that by doing this $*$ operation, we are always adding ``small'' numbers to the beginning of the permutation. By considering the reverse-complement-inverse, we could also add ``large'' numbers to the end of a permutation (right before the 1). For example, we saw that we can always insert a 2 in the first position and remain cyclic and 312-avoiding. However, we could also always add an $n$ in the second-to-last position. There is a lot of intersection in these cases, so it does not improve the growth rate. However, by taking the results from Theorem~\ref{thm:312}, and combining it with this insertion of $n$ in the second-to-last position, you get 
\begin{align*}
    c_n  &\geq 2c_{n-1} + c_{n-3} + c_{n-4}  +3c_{n-5} + 16c_{n-6} + 42c_{n-7} + 111c_{n-8} + 351c_{n-9} \\
& \ \ \ \ + 1200c_{n-10}+3886c_{n-11} + 12619c_{n-12}+41701c_{n-13} +141612c_{n-14}\\
& \ \ \ \ +484006c_{n-15} + 1665407c_{n-16} + 5757167c_{n-17} + 20066367c_{n-18}-28174500c_{n-19},
\end{align*}
but since $c_{n-18}\geq c_{n-19}$ and $c_{n-17}\geq c_{n-18}+c_{n-19}\geq 2c_{n-19}$, we can actually replace those last three terms with $+3406201c_{n-19}$. In particular, this proves a stronger verrsion of B\'{o}na and Cory's conjecture \cite{BC2019} that $c_n(312)\geq 2c_{n-1}(312).$

Next, let us note that as a consequence of Lemma \ref{lem:312Break2}, cyclic 312-avoiding permutations can be counted in terms of the number of cyclic 312-avoiding permutations that end in 21.

\begin{corollary} Let $n \geq 2$ Then \[ c_n(312) = \sum_{k=2}^{n} a_k(312)c_{n+1-k}(312) \]
where $a_k(312) =  |\{\pi \in \C_k(312) : \pi_{k-1}\pi_k = 21\}|$.
\end{corollary}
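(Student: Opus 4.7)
The plan is to apply Lemma~\ref{lem:312Break2} directly and view it as setting up a bijection. Given any $\pi \in \C_n(312)$ with $n \geq 2$, the entry $2$ occupies some unique position $p$ in $\pi$, and writing $\pi = \alpha 2 \beta 1$ with $\alpha = \pi_1 \cdots \pi_{p-1}$ yields, via Lemma~\ref{lem:312Break2}, a decomposition of $\pi$ into $\alpha 21 \in \C_{p+1}(312)$ (whose last two entries are $21$) together with $\red(\beta 1) \in \C_{n-p}(312)$. Conversely, the ``if'' direction of Lemma~\ref{lem:312Break2} guarantees that any such pair reassembles into a unique permutation in $\C_n(312)$.

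Writing $k = p+1$ for the length of the first factor, $k$ ranges over $\{2,3,\ldots,n\}$, with $k=2$ corresponding to empty $\alpha$ (so $\pi$ begins with $2$) and $k=n$ corresponding to empty $\beta$ (so $\pi$ itself ends in $21$). The second factor then has length $n-p = n+1-k$. Thus the assignment $\pi \mapsto (\alpha 21,\; \red(\beta 1))$ gives a bijection
\[
\C_n(312) \;\longleftrightarrow\; \bigsqcup_{k=2}^{n} \{\pi' \in \C_k(312) : \pi'_{k-1}\pi'_k = 21\} \times \C_{n+1-k}(312),
\]
and taking cardinalities on both sides yields $c_n(312) = \sum_{k=2}^n a_k(312)\, c_{n+1-k}(312)$, as claimed.

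Since Lemma~\ref{lem:312Break2} has already done the structural work---pinpointing where to split a cyclic $312$-avoiding permutation and verifying that both the cyclic and pattern-avoiding properties are preserved under recombination---there is no real obstacle here. The only care needed is bookkeeping: Lemma~\ref{lem:312Break2} is stated using $|\alpha| = k$, making the first reassembled piece have length $k+2$, so a shift of index is required to align with the summation variable $k$ (the length of the first piece) used in the corollary. A quick sanity check on $n=3$ (where $a_2=1$, $a_3=0$, $c_1=c_2=1$, giving $1$) and $n=4$ (where $a_4=1$, contributing $1 + 0 + 1 = 2 = c_4(312)$) confirms the indexing is correct.
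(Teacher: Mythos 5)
Your argument is correct and matches the paper's intent exactly: the paper presents this corollary as an immediate consequence of Lemma~\ref{lem:312Break2} without writing out the bijection, and your proof simply makes that bijection explicit (splitting at the unique position of $2$, invoking the lemma for both directions, and doing the length bookkeeping). No gaps, and the indexing is handled carefully.
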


Therefore, if we could count the number of cyclic 312-avoiding permutations that end in 21, we could enumerate all cyclic 312-avoiding permutations. 
This idea can actually be extended further. Indeed, if we let $a'_n(312)$ be the number of permutations $\pi\in\C_n(312)$ that end in $\pi_{n-1}\pi_n=21$ and begin with $\pi_1=3,$ and let $b_n(312)$ be the number of permutations $\pi\in\C_n(312)$ that end with $\pi_{n-2}\pi_{n-1}\pi_n=321$, and $b'_n(312)$ be the number that end in $\pi_{n-3}\pi_{n-2}\pi_{n-1}\pi_n=3n21$,
then \[a_n(312)= \sum a'_{n-j}(312)b'_{j+4} + \sum a_{n-j-1}b_{j+3}\]
for reasons similar to those in the proof of Lemma~\ref{lem:312Break2}. In short, if $\pi =\gamma3\alpha21\in\C_{n}(312),$ then either $\gamma3n21$ and $3\alpha21$ are both cyclic and 312-avoiding or $\gamma321$ and $\alpha21$ are both cyclic and 312-avoiding. 

Though there does seem to be some nice structure to these permutations, counting them exactly still appears to be rather difficult question.

\section{321-avoiding cycles}

In this section, we work with permutations in $\C_n(321).$ Similar to the last section, we define a binary operation $\odot$ that lets us augments a 321-avoiding cyclic permutation by another 321-avoiding cyclic permutation to build larger ones, though this one is slightly more complicated.

\begin{definition}\label{defn:321-combine}
Let $\alpha = \alpha_1\alpha_2\cdots\alpha_m \in \S_m$ and $\beta = \beta_1\beta_2\cdots\beta_n \in \S_n$ for $m,n\geq 3$. Further suppose $\alpha_r = m-1$ and $\alpha_s = m$ for some $r < s < m$. Define the permutation $\alpha \odot \beta\in\S_{m+n-3}$ by
\[ \alpha \odot \beta = \alpha_1\alpha_2\cdots\alpha_{r-1}\bar{\beta_1}\alpha_{r+1}\alpha_{r+2}\cdots\alpha_{s-1}\bar{\beta_2}\alpha_{s+1}\alpha_{s+2}\cdots\alpha_{m-1}\bar{\beta_3}\bar{\beta_4}\cdots\bar{\beta_{n}} \]
where
\[ \bar{\beta_i} = \begin{cases} \alpha_m & \text{if } \beta_i=1 \\
\beta_i+m-3 & \text{otherwise.} \end{cases}\]
\end{definition}

In essence, the permutation $\alpha \odot \beta$ can be thought of as ``inserting" elements into $\beta$, or ``augmenting" $\beta$ in the following way. It is defined when $m-1$ appears before $m$ in $\alpha$ in positions $r$ and $s$ respectively, and is formed by starting with $\beta$ and
\begin{itemize}
\item inserting $\alpha_{s+1}\cdots\alpha_{m-1}$ into the third position of $\beta$ (and shifting elements in $\beta$ appropriately),
\item inserting $\alpha_{r+1}\cdots\alpha_{s-1}$ into the second position of $\beta$ (and shifting elements in $\beta$ appropriately), and
\item inserting $\alpha_{1}\cdots\alpha_{r-1}$ into the first position of $\beta$ (and shifting elements in $\beta$ appropriately).
\end{itemize}

We will show that under certain conditions, the product $\alpha\odot \beta$ of 321-avoiding cyclic permutations is itself a 321-avoiding cyclic permutation. The following example demonstrates how this definition works in practice, particularly in the cases where both $\alpha$ and $\beta$ are cyclic 321-avoiding permutations.

\begin{example}
    Suppose $\alpha = 3561724 = (1,3,6,2,5,7,4)$ and $\beta=34568127 =(1, 3, 5, 8, 7, 2, 4, 6)$. In $\alpha$, we have that the value  $m-1=6$ is in position $r=3$ and the value $m=7$ is in position $s=5.$ Then,
    \[
    \alpha\odot\beta = \fbox{3} \ \fbox{5} \ \mathbf{7} \ \fbox{1} \ \mathbf{8} \ \fbox{2} \ \mathbf{9 \ 10 \ 12 \ 4 \ 6 \ 11} = (1,3,7,9,12,11,6,2,5,8,10,4).
    \]
    The bold elements correspond to $\beta$ and the boxed elements correspond to the elements from $\alpha$ that have been inserted into $\beta$. This example is illustrated in Figure~\ref{fig:321 example}.
\end{example}

\begin{figure}[ht]
\centering
\begin{tabular}{c}
$\alpha = 3561724$ \\
\begin{tabular}{c}
\begin{tikzpicture}[scale=.5]
\newcommand\myx[1][(0,0)]{\pic at #1 {myx};}
\tikzset{myx/.pic = {\draw 
(-2.5mm,-2.5mm) -- (2.5mm,2.5mm)
(-2.5mm,2.5mm) -- (2.5mm,-2.5mm);}}
\draw[gray] (0,0) grid (7,7);
\foreach \x/\y in {
1/3,
2/5,
3/6,
4/1,
5/7,
6/2,
7/4
}
\myx[(\x-.5,\y-.5)];
\draw[-, ultra thick,green!70!black] (.5,.5)--(.5,2.5)
--(2.5,2.5)--(2.5,5.5)--(5.5,5.5)--(5.5,1.5)--(1.5,1.5)--(1.5,4.5)--(4.5,4.5)--(4.5,6.5)--(6.5,6.5)--(6.5,3.5)--(3.5,3.5)--(3.5,.5)--(.5,.5);
\end{tikzpicture} \end{tabular} \\[2cm]  $\beta=34568127$ \\ \begin{tabular}{c}
\begin{tikzpicture}[scale=.5]
\newcommand\myx[1][(0,0)]{\pic at #1 {myx};}
\tikzset{myx/.pic = {\draw 
(-2.5mm,-2.5mm) -- (2.5mm,2.5mm)
(-2.5mm,2.5mm) -- (2.5mm,-2.5mm);}}
\draw[gray] (0,0) grid (8,8);
\foreach \x/\y in {
1/3,
2/4,
3/5,
4/6,
5/8,
6/1,
7/2,
8/7
}
\myx[(\x-.5,\y-.5)];
\draw[-, ultra thick,red!70!black] (.5,.5)--(.5,2.5)
--(2.5,2.5)--(2.5,4.5)
--(4.5,4.5)--(4.5,7.5)--(7.5,7.5)--(7.5,6.5)--(6.5,6.5)--(6.5,1.5)--(1.5,1.5)--(1.5,3.5)--(3.5,3.5)--(3.5,5.5)--(5.5,5.5)--(5.5,.5)--(.5,.5);
\end{tikzpicture}
\end{tabular}
\end{tabular}
$\longrightarrow$
\begin{tabular}{c}
$\alpha\odot\beta = 3 \ 5 \ 7 \ 1 \ 8 \ 2 \ 9 \ 10 \ 12 \ 4 \ 6 \ 11$ \\
\begin{tikzpicture}[scale=.5]
\newcommand\myx[1][(0,0)]{\pic at #1 {myx};}
\tikzset{myx/.pic = {\draw 
(-2.5mm,-2.5mm) -- (2.5mm,2.5mm)
(-2.5mm,2.5mm) -- (2.5mm,-2.5mm);}}
\draw[gray] (0,0) grid (12,12);
\foreach \x/\y in {
1/3,
2/5,
3/7,
4/1,
5/8,
6/2,
7/9,
8/10,
9/12,
10/4,
11/6,
12/11
}
\myx[(\x-.5,\y-.5)];
\draw[-, ultra thick,green!70!black] (3.5,3.5)--(3.5,.5)--(.5,.5)--(.5,2.5)--(2.5,2.5)--(2.5,5.5);
\draw[-, ultra thick,red!70!black](2.5,5.5) --(2.5,6.5)--(6.5,6.5)--(6.5,8.5)--(8.5,8.5)--(8.5,11.5)--(11.5,11.5)--(11.5,10.5)--(10.5,10.5)--(10.5,5.5)--(5.5,5.5);
\draw[-, ultra thick,green!70!black](5.5,5.5)--(5.5,1.5)--(1.5,1.5)--(1.5,4.5)--(4.5,4.5)--(4.5,5.5);
\draw[-, ultra thick,red!70!black] (4.5,5.5)--(4.5,7.5)--(7.5,7.5)--(7.5,9.5)--(9.5,9.5)--(9.5,3.5)--(3.5,3.5);
\end{tikzpicture} 
\end{tabular}
\caption{This example demonstrates the operation defined in Definition~\ref{defn:321-combine}. Here $\alpha\in\C_7(321)$ and $\beta\in\C_8(321)$. In this case, by Lemma~\ref{lem:321-all}, $\alpha\odot\beta$ is still cyclic and 321-avoiding since $\alpha_7\neq 6$ and $\beta_2\neq 1$.}
\label{fig:321 example}
\end{figure}

Let us first show that under certain circumstances, given cyclic 321-avoiding permutations $\alpha$ and $\beta,$ the product $\alpha\odot\beta$ is also a cyclic 321-avoiding permutations.
Note that if $k = 0$, then $\alpha \odot \beta = \beta$, so we consider the case when $k\geq 1$ in the next lemma.

\begin{lemma} \label{lem:321-all} Suppose $k \geq 1$ and $n \geq 3$, and let $\alpha = \alpha_1\alpha_2\cdots\alpha_{k+3}  \in \C_{k+3}(321)$ and $\beta = \beta_1\beta_2\cdots\beta_n \in \C_{n}(321)$. 
If $k=1$, then $\alpha \odot \beta \in \C_{n+1}(321)$. For $k\geq 2$, if both $\alpha_{k+3} \neq k+2$ and $\beta_2 \neq 1,$ then $\alpha \odot \beta \in \C_{n+k}(321)$.
\end{lemma}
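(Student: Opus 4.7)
The plan is to verify separately that $\pi := \alpha \odot \beta$ avoids $321$ and that it is a single $(m+n-3)$-cycle. For both parts, I partition the positions of $\pi$ into the \emph{$\alpha$-positions} $P_\alpha := \{1,\ldots,m-1\} \setminus \{r,s\}$, where $\pi$ agrees with $\alpha$, and the \emph{$\beta$-positions} $P_\beta := \{r,s,m,m+1,\ldots,m+n-3\}$, which carry $\bar\beta_1,\ldots,\bar\beta_n$ in order. The values at $\alpha$-positions lie in $\{1,\ldots,m-2\} \setminus \{\alpha_m\}$, the values at $\beta$-positions lie in $\{\alpha_m\} \cup \{m-1,\ldots,m+n-3\}$, and since $\alpha_m \leq m-2$ is strictly smaller than every other $\beta$-position value, the relative order of $\bar\beta_1,\ldots,\bar\beta_n$ in $\pi$ matches that of $\beta_1,\ldots,\beta_n$ in $\beta$.

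For $321$-avoidance I would case-analyze a hypothetical pattern at positions $i_1 < i_2 < i_3$ according to how many of the $i_j$ lie in $P_\alpha$. Three in $P_\alpha$ contradicts $\alpha \in \S_m(321)$; three in $P_\beta$ contradicts $\beta \in \S_n(321)$ by the order-preservation observation. Every mixed case reduces to a $321$ pattern in $\alpha$ after swapping each $\beta$-position entry back for its $\alpha$-counterpart (the triple $(\bar\beta_1,\bar\beta_2,\bar\beta_3)$ at positions $(r,s,m)$ becomes $(m-1,m,\alpha_m)$, and a $\bar\beta_j = \alpha_m$ at position $m+j-3$ moves back to position $m$), with one exception: the triple $(\bar\beta_1,\bar\beta_2,\alpha_p)$ at positions $(r,s,p)$ with $p \in P_\alpha$. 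That exceptional case requires $\beta_1 > \beta_2$, but the hypothesis $\beta_2 \neq 1$ combined with $\beta \in \S_n(321)$ and the cyclicity of $\beta$ forces $\beta_j \geq \beta_2$ for every $j \geq 3$, leaving no position for the value $1$ in $\beta$---a contradiction. For $k=1$ there are no $\alpha$-positions strictly after $s$, so the exceptional case is vacuous and the hypothesis on $\beta_2$ is not needed.

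For cyclicity I would trace $\pi$'s cycle starting from $1$. Under the bijection $\phi \colon P_\beta \to [n]$ sending $r \mapsto 1$, $s \mapsto 2$, $m \mapsto 3$, and $m+j-3 \mapsto j$, the value $\pi(v)$ at a $P_\beta$-position $v$ with $\phi(v) = j$ equals $\alpha_m$ when $\beta_j = 1$, and has $\phi$-image $\beta_j$ otherwise; on $P_\alpha$, $\pi$ agrees with $\alpha$. Hence $\pi$'s cycle follows $\alpha$'s cycle from $1$ up to the first passage through $\{r,s\}$, then performs a $\beta$-excursion following $\beta$'s cycle (through $\phi$) until it would return to index $1$, at which point it outputs $\alpha_m$ and resumes $\alpha$'s cycle from the position $\alpha_m$; a second $\beta$-excursion may be needed to cover the remaining arc of $\beta$ through the second element of $\{r,s\}$. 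Since $\alpha$ is a single $m$-cycle whose two arcs (cut by the steps $r \to m-1$ and $s \to m$) cover $P_\alpha \cup \{r,s\}$, and $\beta$ is a single $n$-cycle whose arcs (with the value $1$ spliced out in favor of $\alpha_m$) cover $P_\beta$, the splicing yields one cycle of length $(m-3) + n = m+n-3$.

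The main obstacle is the cycle-tracing: depending on the cyclic order of $r$, $s$, and $\alpha_m$ inside $\alpha$'s cycle, and on whether $\alpha_m \in \{r,s\}$, there are several sub-cases, with ``bouncing'' behavior inside $P_\beta$ when $\alpha_m \in \{r,s\}$. The hypothesis $\beta_2 \neq 1$ is precisely what ensures that entering $\beta$'s cycle at the indices $1$ and $2$ (i.e.\ at $r$ and at $s$) partitions $[n]$ into two complementary arcs, so the two $\beta$-excursions together cover $P_\beta$ exactly once without revisit. The stated condition $\alpha_{k+3} \neq k+2$, i.e., $\alpha_m \neq m-1$, is in fact automatic from $\alpha_r = m-1$ with $r < m$. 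The $k=1$ case reduces to $\alpha \in \{2341, 3142\}$ and can be verified by direct substitution since in both options $P_\alpha$ consists of a single position strictly before $s$.
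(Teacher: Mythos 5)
Your $321$-avoidance argument matches the paper's: both rest on the observations that the values at $\alpha$-positions all lie below every $\bar\beta$-value except $\alpha_m$, that $\bar\beta_1 < \bar\beta_2$, and that any mixed pattern reduces either to a value impossibility or to a $321$ in $\alpha$ by replacing $\bar\beta_1,\bar\beta_2$ with $\alpha_r,\alpha_s$ and the small $\bar\beta_j = \alpha_m$ with position $m$. Your identification of the ``exceptional'' case $\bar\beta_1\bar\beta_2\alpha_p$ and its elimination via the $\beta_2\neq 1$ hypothesis is exactly how the paper rules it out.

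The cyclicity part, however, is left as a sketch with an unresolved worry, and that is a genuine gap. You flag several sub-cases ``depending on the cyclic order of $r$, $s$, and $\alpha_m$ inside $\alpha$'s cycle, and on whether $\alpha_m\in\{r,s\}$, with `bouncing' behavior,'' but you do not resolve them. In fact they collapse: since $\alpha_s = m$, having $\alpha_m = s$ would make $(s,m)$ a transposition inside the $m$-cycle, which is impossible; and $\alpha_m = r$ merely makes the intermediate arc of $\alpha$'s cycle between $m$ and $r$ empty. The paper avoids all of this by splitting into just two cases according to whether $m$ or $m-1$ occurs first in $\alpha$'s cycle notation, and then \emph{writing the cycle of $\alpha\odot\beta$ explicitly} as a splice of the two arcs of $\alpha$'s cycle (cut at the steps $s\mapsto m$ and $r\mapsto m-1$) with the two arcs of $\beta$'s cycle (cut at $1$ and at the step into $2$). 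Both your ``excursion'' picture and the paper's formula are the same idea, but you need to actually write out the resulting cycle and check it is a single $(n+k)$-cycle; this is the substantive step still missing. One small side point: the hypothesis $\alpha_{k+3}\neq k+2$ is not ``automatic'' in the way you suggest --- it is precisely what guarantees that positions $r<s<k+3$ exist so that $\odot$ is applicable at all, so asserting ``$\alpha_r=m-1$ with $r<m$'' already presupposes it.
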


\begin{proof}  If $k = 1$, and there are only two possible cyclic 321-avoiding permutations for $\alpha$, namely 3142 and 2341, that satisfy the condition that 3 appears before 4 in the one-line notation. By definition $3142 \odot \beta = \bar{\beta}_11\bar{\beta}_2\bar{\beta}_3\cdots\bar{\beta}_{n-1}$
where $\bar{\beta}_i=\beta_i +1$ for all $i$. It is clear that $3142 \odot \beta$ is 321-avoiding; the permutation is also cyclic with its cycle structure obtained by inserting the element 2 at the end of the cycle and shifting the other elements, excepting 1, by one. Similarly, $2341 \odot \beta$ is formed by inserting the element 2 into the first position of $\beta$ and is also cyclic 321-avoiding.


For $k \geq 2$, since $\alpha_{k+3} \neq k+2$, there exist $1 \leq r < s < k+3$ so that $\alpha_r = k+2$ and $\alpha_s = k+3$. Indeed, we know that $r < s$ because otherwise $\alpha_s\alpha_r\alpha_{k+3}$ would be a 321 pattern. Now consider
\[ \alpha \odot \beta = \alpha_1\alpha_2\cdots\alpha_{r-1}\bar{\beta}_1\alpha_{r+1}\alpha_{r+2}\cdots\alpha_{s-1}\bar{\beta}_2\alpha_{s+1}\alpha_{s+2}\cdots\alpha_{k+2}\bar{\beta}_3\bar{\beta}_4\cdots\bar{\beta}_{n} \]
where
\[ \bar{\beta}_i = \begin{cases} \alpha_{k+3} & \text{if } \beta_i=1 \\
\beta_i+k & \text{otherwise.} \end{cases}\]

Because all of the $\alpha_i$'s and all of the $\bar{\beta}_j$'s  in $\alpha \odot \beta$ appear in the same relative order as they did in the permutations $\alpha$ and $\beta$ respectively, any 321-pattern in $\alpha \odot \beta$ must include at least one $\alpha_i$ and at least one $\bar{\beta}_j$. Furthermore, since $\beta_1 \neq 1$ (since $\beta$ is cyclic) and $\beta_2 \neq 1$ (by stated hypotheses), $\bar{\beta_1} > \alpha_i$ and $\bar{\beta_2} > \alpha_i$ for all $i$, and $\bar{\beta_1} < \bar{\beta_2}$ since otherwise $\beta_1\beta_21$ would be a 321 pattern in $\beta$.

Suppose toward a contradiction that $\bar{\beta_i}\alpha_j\alpha_m$ is a 321-pattern in $\alpha \odot \beta$ for $i \in \{1, 2\}$ and some $j < m$. Then either $\alpha_r\alpha_j\alpha_m$ (if $i=1$) or $\alpha_s\alpha_j\alpha_m$ (if $i=2$) is a 321 pattern in $\alpha$ which is a contradiction.  Now suppose $\bar{\beta_i}\alpha_j\bar{\beta_m}$ is a 321 pattern in  $\alpha \odot \beta$ for $i \in \{1,2\}$ and $m > 2$. Since $\alpha_j > \bar{\beta_m},$ we must have $\bar{\beta_m} = \alpha_{k+3}$ and then either $\alpha_r\alpha_j\alpha_{k+3}$ (if $i=1$) or $\alpha_s\alpha_j\alpha_{k+3}$ (if $i=2$) is a 321-pattern in $\alpha$ giving another contradiction. Thus $\alpha \odot \beta$ is 321-avoiding.


To see that $\alpha \odot \beta$ is cyclic, we begin by considering the cycle notation of $\alpha$ and $\beta$. First suppose that $k+3$ comes before $k+2$ in the cycle notation of $\alpha$ and that $k+3$ occurs in position $i$ of the cycle and $k+2$ occurs at position $j > i$. Thus, we can write
\[ \alpha = (1, a_2, \ldots, a_{i-2}, s, k+3, a_{i+1}, \ldots, a_{j-2}, r, k+2, a_{j+1}, \ldots, a_{k+3}).\]
For the cycle notation of $\beta$, suppose 2 is in position $m<n$ and write
\[ \beta = (1, b_2, \ldots, b_{m-1}, 2, b_{m+1}, \ldots, b_n).\] Then the cycle notation of $\alpha \odot \beta$ is given by
\[ \alpha \odot \beta = (1,  a_2, \ldots, a_{i-2}, s, \overline{b_{m+1}}, \ldots, \overline{b_n}, a_{i+1}, \ldots, a_{j-2}, r, \overline{b_2}, \ldots, \overline{b_{m-1}}, k+2, a_{j+1}, \ldots, a_{k+3}) \]
where $\overline{b_i} = b_i+k.$ 

The case where $k+3$ comes after $k+2$ in the cycle of $\alpha$ is similar. In this case, if 
\[ \alpha = (1, a_2, \ldots, a_{i-2}, r, k+2, a_{i+1}, \ldots, a_{j-2},s, k+3, a_{j+1}, \ldots, a_n),\] 
then
\[ \alpha \odot \beta = (1,  a_2, \ldots, a_{i-2}, r, \overline{b_{2}}, \ldots, \overline{b_{m-1}}, k+2, a_{i+1}, \ldots, a_{j-2}, s, \overline{b_{m+1}}, \ldots, \overline{b_n}, a_{j+1}, \ldots, a_{k+3}). \]
\end{proof}



Note that if $\gamma \in \C_{n}(321)$ with $n\geq 3$, then it cannot be the case that both $\gamma_1=2$ and $\gamma_2=1$ since $\gamma$ is cyclic. If $\gamma_1=2,$ then deleting the element 2 from $\gamma$ results in a cyclic 321-avoiding permutation, and if $\gamma_2=1,$ deleting the element 1 results in a cyclic 321-avoiding permutation. Therefore, in the case when $k=1$ in the previous lemma, the number of permutations in $\C_n(321)$ of the form $\alpha\odot\beta$ is equal to $2c_{n-1}(321)$, and thus we attain a bound of $c_{n}(321) \geq 2c_{n-1}(321).$ However, this growth rate can be improved by considering $\alpha$ of length $m=k+3$ with $m \geq 5.$ We can easily find that $c_5(321) = 10,$ and exactly 2 of these permutations, namely 34512 and 45123, do not start with 2 or have a 1 in the second position. Consider the permutations $34512 \odot \beta$ and $45123 \odot \beta$ whose forms are
\begin{equation}\label{eqn:321-34512}
34512 \odot \beta = \fbox{3}\bar{\beta_1}\bar{\beta_2}\fbox{1}\bar{\beta_3}\bar{\beta_4}\cdots\bar{\beta_n}\end{equation}
and
\begin{equation}\label{eqn:321-45123} 45123 \odot \beta = \bar{\beta_1}\bar{\beta_2}\fbox{1}\fbox{2}\bar{\beta_3}\bar{\beta_4}\cdots\bar{\beta_n}.\end{equation}

Note that in order for the resulting augmented permutations to be $321$-avoiding, it is necessary that $\beta_1 < \beta_2.$ If $\beta$ is cyclic 321-avoiding, this condition is equivalent to $\beta$ not having a 1 in the second position.

Furthermore, because the inverse of any cyclic 321-avoiding permutation is also cyclic 321-avoiding, the permutations $(34512 \odot \beta)^i$ and $(45123 \odot \beta)^i$ are possible ways to build larger cyclic 321-avoiding permutations. These operations correspond to taking a permutation $\beta'$ with $\beta_1'\neq 2$, and in the first case, inserting 4 at the beginning of $\beta'$ and a 1 after $\beta'_1$, and in the second case, appending 34 to the beginning of $\beta'$, shifting the remaining elements of $\beta'$ appropriately.
 Unfortunately, this additional method of augmenting permutations produces some permutations that could be achieved without taking inverses, so some care must be taken in order to avoid double-counting. The next lemma below provides the generalized result.
In this lemma, we define sets $X_{k,n}$ to be cyclic 321-avoiding permutations of the form $\alpha\odot\beta$ or $(\alpha \odot \beta)^i$ for some $\alpha$ in the set $A_k$, which is defined to be those elements of $\C_{k+3}(321)$ that are not already of the form $\alpha'\odot\beta'$ or $(\alpha'\odot\beta')^i$ for some smaller $\alpha'.$

\begin{lemma}\label{lem:321-Xk} Let $k \geq 1$ and $n\geq k+3$. Let
\[ X_{1,n} = \{\pi \in \C_n(321):  \pi_1 = 2 \text{ or } \pi_2=1\}, \]
and for $k\geq 2,$ let
\begin{align*} X_{k,n} &= \{\alpha \odot \beta, (\alpha \odot \beta)^i: \alpha \in A_k, \beta \in \C_{n-k}(321) \text{ with } \beta_2 \neq 1\} 
\end{align*}
where 
\[A_k = \{\alpha \in \C_{k+3}: \alpha \notin X_{1,k+3}\cup X_{2, k+3}\cup \cdots \cup X_{k-1, k+3}\}.\]
Then $|X_{1,n}| = 2c_{n-1}(321)$ and $|X_{k,n}| = |A_k|(2c_{n-k} - 3c_{n-k-1})$ for $k \geq 2.$ 
\end{lemma}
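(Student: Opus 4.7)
The plan is to handle the base case $X_{1,n}$ via a disjoint-union argument and then treat $X_{k,n}$ for $k \geq 2$ by writing it as the union of a ``forward'' and an ``inverse'' image of the $\odot$ construction and applying inclusion-exclusion.

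For the base case, I would write $X_{1,n} = U \sqcup V$ with $U := \{\pi \in \C_n(321) : \pi_1 = 2\}$ and $V := \{\pi \in \C_n(321) : \pi_2 = 1\}$. The paragraph immediately preceding the lemma already establishes that simultaneously having $\pi_1 = 2$ and $\pi_2 = 1$ would create the 2-cycle $(1,2)$, contradicting cyclicity for $n \geq 3$, so the union is disjoint. The same paragraph supplies bijections $U \cong \C_{n-1}(321) \cong V$ by deleting the value $2$ (respectively $1$) from the cycle notation of $\pi$ and relabeling; both deletions preserve cyclicity and 321-avoidance. Hence $|X_{1,n}| = 2 c_{n-1}(321)$.

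For the inductive case $k \geq 2$, set $P := \{\alpha \odot \beta : \alpha \in A_k,\ \beta \in \C_{n-k}(321),\ \beta_2 \neq 1\}$ and $Q := \{\pi^i : \pi \in P\}$, so that $X_{k,n} = P \cup Q$ and $|X_{k,n}| = |P| + |Q| - |P \cap Q|$. By the base-case bijection applied at length $n-k$, the number of admissible $\beta$ is $c_{n-k}(321) - c_{n-k-1}(321)$. The map $(\alpha, \beta) \mapsto \alpha \odot \beta$ is injective: from $\pi = \alpha \odot \beta$ one recovers $(r, s)$ as the two positions in $[1, k+2]$ carrying values $\geq k+2$, recovers $\alpha_{k+3}$ as the unique value $\leq k+1$ sitting at a position $\geq k+3$, and then reads off the remaining entries of $\alpha$ and all of $\beta$. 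Therefore $|P| = |A_k|(c_{n-k} - c_{n-k-1})$, and $|Q| = |P|$ because inversion is an involution on $\S_n$ that preserves both 321-avoidance and cyclicity.

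The heart of the argument is to show $|P \cap Q| = |A_k| c_{n-k-1}(321)$. I claim that $\pi = \alpha \odot \beta$ lies in $P \cap Q$ if and only if the stronger condition $\beta_1 = 2$ holds. For necessity, $\pi \in Q$ forces $\pi^i$ to have exactly two values $\geq k+2$ in its first $k+2$ positions and exactly one value $\leq k+1$ in positions $[k+3, n]$. Translating these conditions back to $\pi$ via careful bookkeeping (separating the small $\alpha$-entries, the position of the value $k+2$ in $\pi$, and the unique small entry at a position $\geq k+3$) shows that they force the disjunction $(s < k+2$ and $\beta_1 = 2)$ or $(s = k+2$ and $\beta_1 \neq 2)$, where $s$ denotes the position of $k+3$ in $\alpha$. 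For sufficiency, I would verify that whenever $\beta_1 = 2$, setting $\alpha'' := \alpha^i$ and taking $\beta''$ to be the unique permutation recovered from $\pi^i$ by the injectivity argument yields $\pi^i = \alpha'' \odot \beta'' \in P$; this uses that $A_k$ is closed under inversion (inherited from inverse-closure of each $X_{j, k+3}$) and that the recovered $\beta''$ automatically satisfies $\beta''_2 \neq 1$. The main obstacle is then the structural fact that no $\alpha \in A_k$ has $\alpha_{k+2} = k+3$ (so $s < k+2$ always, eliminating the second disjunct): this I would prove by induction on $k$, noting that if $\alpha \in \C_{k+3}(321)$ has $\alpha_{k+2} = k+3$, then defining $\sigma \in \C_{k+2}(321)$ by $\sigma_i = \alpha_i$ for $i \leq k+1$ and $\sigma_{k+2} = \alpha_{k+3}$ gives $\alpha = \sigma \odot 2341$, and combined with the inductive decomposition of $\sigma$ this places $\alpha$ in some $X_{j, k+3}$ with $j < k$, excluding it from $A_k$. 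Combining yields $|X_{k,n}| = 2|A_k|(c_{n-k} - c_{n-k-1}) - |A_k| c_{n-k-1} = |A_k|(2 c_{n-k} - 3 c_{n-k-1})$.
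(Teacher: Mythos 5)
Your proposal is correct and follows essentially the same strategy as the paper: disjoint union for $X_{1,n}$, inclusion--exclusion $|X_{k,n}| = |Y| + |Z| - |Y\cap Z|$ with $|Y| = |Z| = |A_k|(c_{n-k}-c_{n-k-1})$, and identifying $Y \cap Z$ with the $\beta_1=2$ case, using as in the paper the key structural observation that no $\alpha \in A_k$ has $\alpha_{k+2}=k+3$ (equivalently $\alpha = \alpha' \odot 2341$). The only cosmetic difference is that the paper's necessity argument counts small values in the first $k+2$ positions of $\pi$, whereas you dually count large values in the first $k+2$ positions of $\pi^i$; the two are equivalent and lead to the same dichotomy on $s$ and $\beta_1$.
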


\begin{proof} 
As described above, it is clear that $|X_{1,n}| = 2c_{n-1}(321)$, it remains to consider the case where $k \geq 2$. It is easy to see that if
\[ Y = \{\alpha \odot \beta: \alpha \in A_k, \beta \in \C_{n-k}(321) \text{ with } \beta_2 \neq 1\}, \]
then $|Y| = |A_k|(c_{n-k}(321) - c_{n-k-1}(321))$ because the number of permutations $\beta\in\C_{n-k}(321)$ with $\beta_2=1$ is equal to $c_{n-k-1}$. Similarly, if
\[ Z = \{(\alpha \odot \beta)^i: \alpha \in A_k, \beta \in \C_{n-k}(321) \text{ with } \beta_2 \neq 1\}, \]
then $|Z| = |Y|,$ and we need only to count $|Y \cap Z|.$ To that end, suppose $\pi \in Y \cap Z.$ Then we must have $\pi = \alpha \odot \beta$ and $\pi = (\hat{\alpha} \odot \hat{\beta})^i$ for some $\alpha, \hat{\alpha} \in A_k\subseteq \S_{k+3}$ and $\beta, \hat{\beta} \in \C_{n-k}$ with $\beta_2 \neq 1$ and $\hat{\beta_2} \neq 1$. Let us note that by the definition of $\alpha\odot\beta$, the first $k+2$ elements of $\pi$ must contain at least $k$ elements less than or equal to $k+2$ (namely those that come from $\alpha$). Notice that a similar argument is true by definition of $(\hat\alpha\odot \hat\beta)^i$ for those that come from $\hat\alpha^i$. However, $(\hat\alpha\odot \hat\beta)^i$ is obtained by combining $\hat\alpha^i$ and $\hat\beta^i$ as:
\[(\hat\alpha\odot \hat\beta)^i=\hat\alpha^i_1\ldots\hat\alpha^i_{R-1}\hat\beta^i_1\hat\alpha^i_{R+1}\ldots\hat\alpha^i_{k+1}\hat\beta^i_2\hat\beta^i_3\ldots\hat\beta^i_{n-k}
\]
where $\hat\alpha^i_R=k+3$ and the values of $\hat\beta^i$ have been shifted appropriately. Thus if there are exactly $k$ elements less than or equal to $k+2$, (that is, if all such elements come from $\alpha$ and $\hat\alpha^i$ only), then $\alpha_{k+2}=k+3$. But then $\alpha=\alpha'\odot 2341$ where $\alpha'$ is obtained by deleting $k+3,$ and thus $\alpha\not\in A_k.$

Thus, since $\pi$ is cyclic, this implies that $k-2$ of the first $k-1$ elements of $\pi$ are less than or equal to $k-1.$ However, these include $\bar\beta_1$ and $\bar\beta_2$.  If $\beta_1$ and $\beta_2$ were both greater than 2, then $\bar\beta_1$ and $\bar\beta_2$ would both be greater than $k-1$. Since $\beta_2\neq 1,$ we must therefore have $\beta_1=2.$ Finally, since $\pi\in Y\cap Z$ exactly when $\beta=2$ and there are $c_{n-k-1}$ such $\beta$ of length $n-k$, we have that $|Y\cap Z|=|A_k|c_{n-k-1}$. The result follows.
\end{proof}

\begin{remark}
    It is clear from the proof that the converse of Lemma~\ref{lem:321-Xk} is also true. In particular,  if $\gamma \in \C_n(321)$ and there exists $\alpha \in A_k$ with $\alpha_r=k+2, \alpha_s=k+3$ and either $\gamma_j = \alpha_j$ for all $j \in [1,k-1] \setminus \{r,s\}$ or $(\gamma^i)_j = \alpha_j$ for all $j \in [1,k-1] \setminus \{r,s\}$, then $\gamma \in X_{k,n}$.
\end{remark}


By construction, the  sets $X_{k,n}$ are disjoint. As described above, it is clear that $A_1=\{3142,2341\}$ and $A_2=\{34512,45123\}.$ Using Lemma \ref{lem:321-Xk}, this gives us the the bound \[ c_n(321) \geq 2c_{n-1}(321) + 4c_{n-2}(321) - 6c_{n-3}(321),\] which yields a growth rate of approximately 2.6554. However, we can continue to compute the size of $A_k$ for some $k>2,$ which improves this result.  

Lemma \ref{lem:321-Xk} implies that \[ |A_k| = c_{k+3} - |X_{1,k+3}| - |X_{2, k+3}| - \cdots- |X_{k-1,k+3}|.\] Thus, using the fact that $|X_{2,n}| = 4c_{n-1}(321) - 6c_{n-3}(321)$, we can inductively apply Lemma~\ref{lem:321-Xk} to see that:
\[|A_3| = c_6 - |X_{1,6}| - |X_{2,6}| = c_6-(2c_5) - (4c_4 - 6c_3) = 0 \]
and
\[ |X_{3,n}|=|A_3|(2c_{n-3}-3c_{n-4}) = 0.\] However, for $k=4$, we have $|X_{1,7} \cup X_{2,7}| = 64$, while $c_7(321) = 66$; in other words, we miss 2 permutations in $\C_7(321)$ if we only use the $\odot$ operation.  These two missed permutations are exactly those in $A_4=\{5671234,4567123\},$ so $|A_4| = 2.$ Thus, for any $n\geq 7$, we have $|X_{4,n}| = 2(2c_{n-4} - 3c_{n-4}).$ This gives the bound
\[ c_n(321) \geq 2c_{n-1}(321) + 4c_{n-2}(321) - 6c_{n-3}(321) + 4c_{n-4}(321) - 6c_{n-5}(321)\] which yields a growth rate of approximately 2.7486. 
We can continue this process indefinitely as long as we know the value of $c_k(321)$.  Table~\ref{fig:321-1} summarizes the values of $|X_{k,n}|$ and $|A_k|$  for $1\leq k \leq 16$ that will be used in the next theorem.

\begin{table}[ht]
\begin{center}
\begin{tabular}{l|l|l|l|l}
$k$ & $c_{k+3}$ & $\displaystyle{\sum_{i=1}^{k-1}|X_{i,k+3}|}$ & $|A_k|$ & $|X_{k,n}|$\\ \hline
1 & 4 & & & $2c_{n-1}$\\
2 & 10 & 8 & 2 & $4c_{n-2}-6c_{n-3}$\\
3 & 24 & 24 & 0 & 0\\
4 & 66 & 64 & 2 & $4c_{n- 4 } -  6 c_{n- 5 }$\\
5 & 178 & 172 & 6 & $12c_{n- 5 } -  18 c_{n- 6 }$\\
6 & 512 & 504 & 8 & $16c_{n- 6 } -  24 c_{n- 7 }$\\
7 & 1486 & 1440 & 46 & $92c_{n- 7 } -  138 c_{n- 8 }$\\
8 & 4446 & 4336 & 110 & $220c_{n- 8 } -  330 c_{n- 9 }$\\
9 & 13468 & 13172 & 296 & $592c_{n- 9 } -  888 c_{n- 10 }$\\
10 & 41648 & 40512 & 1136 & $2272c_{n- 10 } -  3408 c_{n- 11 }$\\
11 & 130178 & 126948 & 3230 & $6460c_{n- 11 } -  9690 c_{n- 12 }$\\
12 & 412670 & 402288 & 10382 & $20764c_{n- 12 } -  31146 c_{n- 13 }$\\
13 & 1321418 & 1286184 & 35234 & $70468c_{n- 13 } -  105702 c_{n- 14 }$\\
14 & 4274970 & 4161840 & 113130 & $226260c_{n- 14 } -  339390 c_{n- 15}$\\
15 & 13948966 & 13571812 & 377154 & $754308c_{n- 15 } -   1131462c_{n- 16}$\\
16 & 45890440 & 44616288 & 1274152 & $2548304c_{n- 16 } -   3822456c_{n- 17}$
\end{tabular}
\end{center}
\caption{For each $k$, we provide the value of $c_{k+3}:=c_{k+3}(321)$, the number of permutations generated by $\pi=\alpha\odot\beta$ for $\alpha\in A_i$ with $i<k$, the number that are not generated of this form (i.e., those in the set $A_k$), and the total number of permutations in $\C_n(321)$ of the form $\pi=\alpha\odot\beta$ for some $\alpha\in A_k$.}
\label{fig:321-1}
\end{table}

\begin{theorem}\label{thm:321}
Let $n \geq 19$ and let $c_n = |\C_n(321)|.$ Then
\begin{align*}
    c_n \geq& \ 2c_{n-1}+4c_{n-2}-6c_{n-3} + 4c_{n-4}+6c_{n-5}-2c_{n-6}+68c_{n-7} +82c_{n-8} +262c_{n-9}\\
    &+1384c_{n-10}+3052c_{n-11}+11074c_{n-12}+39322c_{n-13}+120558c_{n-14}+414918c_{n-15}\\
    &+1416842c_{n-16} - 3822456c_{n-17}
\end{align*}
which implies that $c_n$ growth rate of approximately 3.178858.
\end{theorem}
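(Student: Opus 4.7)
The proof plan is a direct application of Lemma~\ref{lem:321-Xk} combined with the book-keeping summarized in Table~\ref{fig:321-1}. Since the sets $X_{1,n},X_{2,n},\ldots,X_{16,n}$ are pairwise disjoint subsets of $\C_n(321)$ by construction, we have the simple inequality
\[
c_n \;\geq\; \sum_{k=1}^{16} |X_{k,n}|.
\]
From Lemma~\ref{lem:321-Xk}, each summand is $|X_{1,n}|=2c_{n-1}$ and $|X_{k,n}|=|A_k|(2c_{n-k}-3c_{n-k-1})$ for $k\geq 2$. So the first task is just to plug in the values of $|A_k|$ and collect like terms.

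The values $|A_k|$ are obtained inductively using the identity
\[
|A_k| \;=\; c_{k+3} \;-\; \sum_{i=1}^{k-1}|X_{i,k+3}|
\]
(valid because every element of $\C_{k+3}(321)$ either belongs to some $X_{i,k+3}$ with $i<k$, or lies in $A_k$), and the initial values $c_4,c_5,\ldots,c_{19}$ are computed directly with a computer algebra system. This yields the sequence $|A_1|=2,\,|A_2|=2,\,|A_3|=0,\,|A_4|=2,\,|A_5|=6,\ldots,|A_{16}|=1{,}274{,}152$ in Table~\ref{fig:321-1}. I would then collect the coefficient of $c_{n-j}$ in $\sum_{k=1}^{16}|X_{k,n}|$: for each $1\leq j\leq 16$ this coefficient is $2|A_j|-3|A_{j-1}|$ (with the convention $|A_0|=0$, and using $|A_1|=2$ only via the $+2c_{n-1}$ contribution from $X_{1,n}$), and the coefficient of $c_{n-17}$ is $-3|A_{16}|=-3{,}822{,}456$. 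A direct arithmetic verification then yields exactly the coefficients stated in the theorem (e.g.\ $c_{n-7}$ gets $2\cdot 46-3\cdot 8 = 68$, $c_{n-8}$ gets $2\cdot 110-3\cdot 46 = 82$, and so on).

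For the growth rate, I would define $\lambda$ to be the largest positive real root of the characteristic polynomial
\[
p(x) \;=\; x^{17} \;-\; 2x^{16} \;-\; 4x^{15} \;+\; 6x^{14} \;-\; 4x^{13} \;-\; 6x^{12} \;+\; 2x^{11} \;-\; 68x^{10} \;-\; \cdots \;+\; 3{,}822{,}456,
\]
and verify numerically that $\lambda\approx 3.178858$. A standard induction argument gives $c_n \geq C \lambda^n$ for some $C>0$: pick $C$ small enough that $c_k \geq C\lambda^k$ for $k=1,\ldots,18$, and then the recursive lower bound, together with the fact that $\lambda$ satisfies $p(\lambda)=0$, propagates the estimate to all $n\geq 19$. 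Hence $\liminf c_n^{1/n}\geq \lambda$.

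The main obstacle is not conceptual but computational: one has to (i)~generate $\C_{k+3}(321)$ for $k$ up to $16$ to determine $|A_k|$, and (ii)~carefully handle the induction subtracting all $|X_{i,k+3}|$ for $i<k$ without double-counting. Both are handled exactly as in the inductive structure of Lemma~\ref{lem:321-Xk}: since the $X_{i,\cdot}$'s are disjoint by construction, the subtraction is legitimate, and any permutation in $\C_{k+3}(321)$ not already produced by some smaller $\alpha'\odot\beta'$ (or its inverse) is by definition in $A_k$. Once these sizes are in hand, the inequality of the theorem follows by a single summation, and the growth rate follows from the dominant root analysis of $p(x)$.
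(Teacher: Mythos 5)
Your derivation of the recursive inequality is exactly the paper's approach: disjointness of the $X_{k,n}$ gives $c_n\geq\sum_{k=1}^{16}|X_{k,n}|$, Lemma~\ref{lem:321-Xk} converts each $|X_{k,n}|$ to $|A_k|(2c_{n-k}-3c_{n-k-1})$ (and $2c_{n-1}$ for $k=1$), and the stated coefficients follow by collecting like terms using the values of $|A_k|$ from Table~\ref{fig:321-1}, which are computed via $|A_k|=c_{k+3}-\sum_{i<k}|X_{i,k+3}|$ exactly as you describe. One small bookkeeping point: the coefficient of $c_{n-2}$ is $2|A_2|=4$ rather than $2|A_2|-3|A_1|=-2$, since $|X_{1,n}|=2c_{n-1}$ contributes no $c_{n-2}$ term; your parenthetical seems to gesture at this, and your numerical checks for $j\geq 3$ are correct.

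The one genuine issue is the induction you sketch for the growth rate. Because several coefficients in the recursion are negative (e.g.\ $-6c_{n-3}$, $-2c_{n-6}$, $-3822456c_{n-17}$), the step ``$c_j\geq C\lambda^j$ for $j<n$ together with $p(\lambda)=0$ gives $c_n\geq C\lambda^n$'' does not go through: substituting a \emph{lower} bound into a negatively weighted term makes the overall estimate \emph{worse}, not better. A minimal counterexample of the phenomenon: the sequence $c_n=2^n$ satisfies $c_n\geq 10c_{n-1}-16c_{n-2}$ with equality, yet does not grow at the dominant root $8$. A correct justification must use additional structure, for instance that each block $|A_k|(2c_{n-k}-3c_{n-k-1})=|X_{k,n}|$ is itself non-negative, or replace the recursion by a cruder one with all coefficients non-negative (at the cost of a smaller $\lambda$). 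The paper itself asserts the numerical growth rate without a proof, so this is not a deviation from the paper's argument, but the plain induction you propose would not be valid as written.
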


\begin{proof} We know that for $n\geq 19$, $c_n \geq \sum_{k=1}^{16} |X_{k,n}|$. Furthermore, we have
\begin{align*} \displaystyle{\left|\bigcup_{k=1}^{16} X_{k,n}\right|} &= 2c_{n-1} + \sum_{k=2}^{16} |A_k| (2c_{n-k} - 3c_{n-k}) \\ &= 2c_{n-1} + 2|A_2|c_{n-2} + \left(\sum_{k=3}^{16} (2|A_k| - 3|A_{k-1}|)c_{n-k}\right) - 3|A_{16}|c_{n-17}.
\end{align*}
Using the values found in the table in Table~\ref{fig:321-1}, we have the desired result.
\end{proof}

Let us note that using using only $k=1$ from Table~\ref{fig:321-1}, we have $c_n\geq 2c_{n-1}$. Using this fact together with the result in Theorem~\ref{thm:321}, we obtain
\begin{align*}
    c_n \geq& \ 2c_{n-1}+c_{n-2}+ 4c_{n-4}+5c_{n-5}+68c_{n-7} +82c_{n-8} +262c_{n-9}
    +1384c_{n-10}\\&+3052c_{n-11}+11074c_{n-12}+39322c_{n-13}+120558c_{n-14}+167725c_{n-15},
\end{align*} which improves the lower bound $c_n\geq 2c_{n-1},$ proven by B\'{o}na and Cory in \cite{BC2019}.

\section{123-avoiding cycles}
In this section, we build permutations in $C_n(123)$ by inserting elements to both the beginning and the end of smaller cyclic 123-avoiding permutations. In some sense, we are wrapping one cyclic 123-avoiding permutation ``around'' another.
We begin with a binary operation that does exactly this for 123-avoiding cyclic permutations with certain properties.

\begin{definition}\label{defn:123}
    Let $\alpha=\alpha_1\alpha_2\cdots\alpha_{2m} \in \C_{2m}(123)$, and $\beta=\beta_1\beta_2\cdots\beta_{n} \in \C_{n}(123)$ for $n \geq 2$ and $m \geq 2$. Further suppose $\{\alpha_1, \alpha_2, \ldots, \alpha_m\} = [m+1, 2m]$. Define the permutation $\alpha \star \beta \in \S_{n + 2m-2}$ by
    \[ \alpha \star \beta = \bar{\alpha}_1\bar{\alpha}_2\cdots\bar{\alpha}_{m-1}\bar{\beta}_1\bar{\beta}_2\cdots\bar{\beta}_n\alpha_{m+2}\alpha_{m+3}\cdots\alpha_{2m}\]
    where
    \[ \bar{\alpha}_i = \alpha_i + n-2\]
    and
    \[ \bar{\beta_i} = \begin{cases}
    \alpha_m + n-2 & \text{if } \beta_i = n,\\
    \alpha_{m+1} & \text{if } \beta_i = 1,\\
    \beta_i + m-1 & \text{else.}
    \end{cases}\]

\end{definition}



We can think of this operation as follows. Given a cyclic 123-avoiding permutation $\alpha$ of even length, if the first half of $\alpha$ contains only elements larger than the second half of $\alpha$, we can consider all but the middle two elements of $\alpha$. Given any other cyclic 123-avoiding permutation $\beta$, we can insert the second half of $\alpha$ (minus one element) at the end of $\beta$ and the first half of $\alpha$ (minus one element) at the beginning of $\beta$, adjusting the values of $\beta$ as necessary. It is clear that the resulting permutation remains 123-avoiding. It turns out that it is also cyclic as is illustrated in the next example and proven in the subsequent lemma.

\begin{example} Let $\alpha = 65873214 = (1,6,2,5,3,8,4,7) \in \C_8(123)$ and $\beta = 462531 =(1,4,5,3,2,6) \in \C_6(123)$. Then
\[ \alpha \star \beta = \fbox{10\ 9\ 12}\   7\ 11\ 5\ 8\ 6\ 3\  \fbox{2\ 1\ 4}\]
where the elements from $\alpha$ are in boxes. In cyclic notation, we have
\[ \alpha \star \beta = (\fbox{1, 10, 2, 9,} 3, \fbox{12, 4,} 7, 8, 6, 5, 11).\]
This example is illustrated in Figure~\ref{fig:123 example}.
\end{example}


\begin{figure}
\centering
\begin{tabular}{c}
  $\alpha=65873214$ \\ \begin{tabular}{c}
\begin{tikzpicture}[scale=.5]
\newcommand\myx[1][(0,0)]{\pic at #1 {myx};}
\tikzset{myx/.pic = {\draw 
(-2.5mm,-2.5mm) -- (2.5mm,2.5mm)
(-2.5mm,2.5mm) -- (2.5mm,-2.5mm);}}
\draw[gray] (0,0) grid (8,8);
\foreach \x/\y in {
1/6,
2/5,
3/8,
4/7,
5/3,
6/2,
7/1,
8/4
}
\myx[(\x-.5,\y-.5)];
\draw[-, ultra thick,green!70!black] (.5,.5)--(.5,5.5)
--(5.5,5.5)--(5.5,1.5)
--(1.5,1.5)--(1.5,4.5)--(4.5,4.5)--(4.5,2.5)--(2.5,2.5)--(2.5,7.5)--(7.5,7.5)--(7.5,3.5)--(3.5,3.5)--(3.5,6.5)--(6.5,6.5)--(6.5,.5)--(.5,.5);
\end{tikzpicture}
\end{tabular} \\[2cm] $\beta = 462531$ \\
\begin{tabular}{c}
\begin{tikzpicture}[scale=.5]
\newcommand\myx[1][(0,0)]{\pic at #1 {myx};}
\tikzset{myx/.pic = {\draw 
(-2.5mm,-2.5mm) -- (2.5mm,2.5mm)
(-2.5mm,2.5mm) -- (2.5mm,-2.5mm);}}
\draw[gray] (0,0) grid (6,6);
\foreach \x/\y in {
1/4,
2/6,
3/2,
4/5,
5/3,
6/1
}
\myx[(\x-.5,\y-.5)];
\draw[-, ultra thick,red!70!black] (.5,.5)--(.5,3.5)
--(3.5,3.5)--(3.5,4.5)--(4.5,4.5)--(4.5,2.5)--(2.5,2.5)--(2.5,1.5)--(1.5,1.5)--(1.5,5.5)--(5.5,5.5)--(5.5,.5)--(.5,.5);
\end{tikzpicture} \end{tabular}
\end{tabular}
$\longrightarrow$
\begin{tabular}{c}
$\alpha\star\beta = 10 \ 9 \ 12 \ 7 \ 11 \ 5 \ 8 \ 6 \ 3 \ 2\ 1 \ 4$ \\
\begin{tikzpicture}[scale=.5]
\newcommand\myx[1][(0,0)]{\pic at #1 {myx};}
\tikzset{myx/.pic = {\draw 
(-2.5mm,-2.5mm) -- (2.5mm,2.5mm)
(-2.5mm,2.5mm) -- (2.5mm,-2.5mm);}}
\draw[gray] (0,0) grid (12,12);
\foreach \x/\y in {
1/10,
2/9,
3/12,
4/7,
5/11,
6/5,
7/8,
8/6,
9/3,
10/2,
11/1,
12/4
}
\myx[(\x-.5,\y-.5)];
\draw[-, ultra thick,green!70!black] 
(.5,.5)--(.5,9.5)--(9.5,9.5)--(9.5,1.5)--(1.5,1.5)--(1.5,8.5)--(8.5,8.5)--(8.5,2.5)--(2.5,2.5)--(2.5,11.5)--(11.5,11.5)--(11.5,3.5)--(3.5,3.5);
\draw[-, ultra thick,red!70!black] (3.5,3.5)--(3.5,6.5)--(6.5,6.5)--(6.5,7.5)--(7.5,7.5)--(7.5,5.5)--(5.5,5.5)--(5.5,4.5)--(4.5,4.5)--(4.5,8);
\draw[-, ultra thick,green!70!black] (4.5,8)--(4.5,10.5)--(10.5,10.5)--(10.5,.5)--(.5,.5);
\end{tikzpicture} 
\end{tabular}
\caption{This example demonstrates the operation defined in Definition~\ref{defn:123}. Here, $\beta\in\C_6(123)$ and $\alpha\in\C_8(123)$ with the first four elements of $\alpha$ all greater than the second four elements of $\alpha$. Thus we can compute $\alpha\star\beta$, which places the large elements of $\alpha$ in front and the small elements of $\alpha$ at the end, with $\beta$ in the middle (with two of the elements from both $\alpha$ and $\beta$ ``merged'' to keep it a cycle). Thus $\alpha$ is ``wrapped around'' $\beta$ and the result is a 123-avoiding cyclic permutation of length 12.}
\label{fig:123 example}
\end{figure}

\begin{lemma} \label{lem:123-take2}
    Let $\alpha=\alpha_1\alpha_2\cdots\alpha_{2m} \in \C_{2m}(123)$, and $\beta=\beta_1\beta_2\cdots\beta_{n} \in \C_{n}(123)$ for $n \geq 1$ and $m \geq 2$. Further suppose $\{\alpha_1, \alpha_2, \ldots, \alpha_m\} = [m+1, 2m]$. Then $\alpha \star \beta \in \C_{n+2m-2}(123).$
\end{lemma}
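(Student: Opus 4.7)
My plan is to establish the two parts independently: first that $\pi := \alpha\star\beta$ avoids $123$, and second that $\pi$ is cyclic.

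For $123$-avoidance, I would observe that $\pi$ splits into three blocks: the initial $m-1$ entries $\bar\alpha_1,\ldots,\bar\alpha_{m-1}$ are all ``very large'' (in $[m+n-1,n+2m-2]$), the final $m-1$ entries $\alpha_{m+2},\ldots,\alpha_{2m}$ are all ``very small'' (in $[1,m]$), and the middle $n$ entries come from $\beta$ under a map that preserves relative order ($\beta_j=1$ maps to the smallest middle value $\alpha_{m+1}$ and $\beta_j=n$ maps to the largest middle value $\alpha_m+n-2$, while the remaining $\bar\beta_j=\beta_j+m-1$ lie in $[m+1,m+n-2]$). A $123$ pattern contained in a single block restricts directly to a $123$ pattern in $\alpha$ or $\beta$, a contradiction. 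A $123$ pattern spanning the initial and final blocks is impossible since every initial value exceeds every final value. A pattern spanning the initial and middle blocks must use $\alpha_m+n-2$ as its top element (the only middle value comparable to an initial value), yielding a $123$ pattern $\alpha_{i_1}\alpha_{i_2}\alpha_m$ in $\alpha$; symmetrically, a middle-to-final pattern forces a $123$ pattern of the form $\alpha_{m+1}\alpha_a\alpha_b$ in $\alpha$.

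For cyclicity, I plan to trace the cycle of $\pi$ starting at $1$ and show it contains all $N:=n+2m-2$ values. Define embeddings $\psi_\alpha\colon[2m]\to[N]$ by $\psi_\alpha(v)=v$ for $v\le m$ and $\psi_\alpha(v)=v+n-2$ for $v\ge m+1$, and $\psi_\beta\colon[n]\to[N]$ by $\psi_\beta(j)=m+j-1$. They agree at the two ``gateway'' positions $\psi_\alpha(m)=\psi_\beta(1)=m$ and $\psi_\alpha(m+1)=\psi_\beta(n)=m+n-1$, and their images together cover $[N]$. Directly from the definition, I would check that $\pi\circ\psi_\alpha=\psi_\alpha\circ\alpha$ on $[1,m-1]\cup[m+2,2m]$ (so $\pi$ mimics $\alpha$ away from the gateways), and $\pi\circ\psi_\beta=\psi_\beta\circ\beta$ on every $j$ with $\beta_j\in[2,n-1]$ (so $\pi$ mimics $\beta$ on the interior of range $B$), while the two boundary values of $\beta$ produce exits to the $\alpha$-part: $\pi(\psi_\beta(j))=\psi_\alpha(\alpha_m)$ when $\beta_j=n$, and $\pi(\psi_\beta(j))=\psi_\alpha(\alpha_{m+1})$ when $\beta_j=1$.

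Because $\{\alpha_1,\ldots,\alpha_m\}=[m+1,2m]$, the cycle of $\alpha$ alternates between $[1,m]$ and $[m+1,2m]$; writing it as $1=v_0,v_1,\ldots,v_{2m-1}$, there exist an even $p$ and an odd $q$ with $v_p=m$ and $v_q=m+1$. The claim is that the cycle of $\pi$ starting at $1$ is obtained from the sequence $\psi_\alpha(v_0),\psi_\alpha(v_1),\ldots,\psi_\alpha(v_{2m-1})$ by inserting, after $\psi_\alpha(v_p)=m$, the arc of $\beta$'s cycle starting at $\beta_1$ and ending at $\beta^{-1}(n)$ (the final step then routes to $\psi_\alpha(\alpha_m)=\psi_\alpha(v_{p+1})$), and inserting, after $\psi_\alpha(v_q)=m+n-1$, the arc from $\beta_n$ to $\beta^{-1}(1)$ (the final step routing to $\psi_\alpha(\alpha_{m+1})=\psi_\alpha(v_{q+1})$). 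Together these two arcs traverse $\beta$'s cycle minus $\{1,n\}$ and so fill the $n-2$ remaining range-$B$ positions $[m+1,m+n-2]$, while $\psi_\alpha$ covers the other $2m$ positions, giving a cycle of length $N$.

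The hard part will be the bookkeeping of degenerate configurations: $\beta_1=n$ (detour $1$ is empty), $\beta_n=1$ (detour $2$ is empty), $\alpha_m=m+1$ (the exit of detour $1$ coincides with the start of detour $2$), or $\alpha_{m+1}=m$ (forcing $q<p$ in $\alpha$'s cycle). Because the gateway exits are defined to equal $\psi_\alpha\circ\alpha$ at $m$ and $m+1$, the cycle description above remains valid in each degenerate case; and the simultaneous occurrence $\alpha_m=m+1$ with $\alpha_{m+1}=m$ is ruled out since it would split $\alpha$ into a transposition and other cycles, contradicting that $\alpha$ is a single $2m$-cycle.
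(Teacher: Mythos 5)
Your proof is correct and takes essentially the same approach as the paper: for cyclicity, both arguments trace the cycle of $\alpha\star\beta$ by following the cycle of $\alpha$ (suitably relabeled) and splicing in two arcs of $\beta$'s cycle at the ``gateway'' values $m=\psi_\alpha(m)=\psi_\beta(1)$ and $m+n-1=\psi_\alpha(m+1)=\psi_\beta(n)$. Your embeddings $\psi_\alpha,\psi_\beta$ and the commuting relations $\pi\circ\psi_\alpha=\psi_\alpha\circ\alpha$, $\pi\circ\psi_\beta=\psi_\beta\circ\beta$ (away from the gateways) express the same relabeling the paper does with its $\overline{a_i}$, $\overline{b_i}$ notation, but make the value-dependent shift explicit and the bookkeeping cleaner; you also spell out the $123$-avoidance case analysis that the paper dismisses as ``by construction.''
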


\begin{proof} It is enough to show that $\alpha \star \beta$ is cyclic as it is 123-avoiding by construction. Consider the cycle notation of $\alpha$ and suppose first that $m+1$ comes before $m$ in the cycle notation occurring in positions $r$ and $s$, respectively:
\[ \alpha = (1, a_2, a_3, \ldots, a_{r-1}, m+1, a_{r+1}, \ldots, a_{s-1}, m, a_{s+1}, \ldots, a_{2m}). \]
Also consider the cycle notation of $\beta$ where $n$ is in position $t$:
\[ \beta = (1, b_2, \cdots, b_{t-1}, n, b_{t+1}, \cdots, b_n).\]
We claim that the cycle notation of $\alpha \star \beta$ is
\[ \alpha \star \beta = (1, \overline{a_2}, \cdots, \overline{a_r}, \overline{b_{t+1}}, \cdots, \overline{b_{n}}, \alpha_{m+1}, \overline{a_{r+2}}, \cdots, \overline{a_{s-1}}, \overline{b_2}, \cdots, \overline{b_{t-1}}, \alpha_m+n-2, \overline{a_{s+2}}, \cdots \overline{a_{2m}})\]
where $\overline{a_i} = a_i + n-2$     and
    \[ \bar{b_i} = \begin{cases}
    \alpha_m + n-2 & \text{if } b_i = n,\\
    \alpha_{m+1} & \text{if } b_i = 1,\\
    b_i + m-1 & \text{else.}
    \end{cases}\]
To prove our claim, we first note that in $\alpha \star \beta$, the elements in positions in the set $[1, m-1]$ are $[n+m-1, n+2m-2] \setminus \{\alpha_m + n-2\}$ while the elements in positions in the set $[n+m, n+2m-2]$ are $[1,m]\setminus \{\alpha_{m+1}\}$. Thus, the sequences
\[ 1, \overline{a_2}, \ldots, \overline{a_r},\]
\[ \overline{a_{r+1}}, \overline{a_{r+2}}, \ldots, \overline{a_{s-1}},\]
and \[ \overline{a_{s+1}}, \overline{a_{s+2}}, \ldots, \overline{a_{2m}} \]
will all appear in the cycle structure of $\alpha \star \beta$. Furthermore, $\overline{a_r} = a_r +n-2 = m+n-1.$ The element in position $m+n-1$ in $\alpha \star \beta$ is $\overline{\beta_n}$ Since $\beta_n = b_{t+1}$ (from the cycle structure of $\beta$), we see that $\overline{b_{t+1}}$ is in position $m+n-1$ as desired. Now note that $\overline{b_n}$ is the position in $\alpha \star \beta$ where the element 1 from $\beta$ (now $\alpha_{m+1}$) is. Thus, $\alpha_{m+1}$ follows $\overline{b_n}$ in the cycle structure of $\alpha \star \beta$. Similar arguments show that $\overline{b_2}$ is in position $\overline{a_{s-1}}$ and that $\overline{a_{s+2}}$ is in position $\overline{b_t} = \alpha_m+n-2$ thus showing that $\alpha \star \beta$ is cyclic.

Now suppose that in the cycle notation of $\alpha$, the element $m$  comes before the element $m+1$, and they occur in positions $r$ and $s$, respectively:
\[ \alpha = (1, a_2, a_3, \ldots, a_{r-1}, m, a_{r+1}, \ldots, a_{s-1}, m+1, a_{s+1}, \ldots, a_{2m}). \]
In this case, a similar argument shows the cycle notation of $\alpha \star \beta$ is
\[ \alpha \star \beta = (1, \overline{a_2}, \cdots, \overline{a_r}, \overline{b_2}, \cdots, \overline{b_{t-1}}, \alpha_m+n-2, \overline{a_{r+2}}, \cdots, \overline{a_{s}},\overline{b_{t+1}}, \cdots, \overline{b_{n}}, \alpha_{m+1},   \overline{a_{s+2}}, \cdots \overline{a_{2m}})\]
and our result holds.
\end{proof}

In order to bound the growth rate of $c_n(123)$, we will, for small $m$, count the number of permutations in $\C_{2m}(123)$ where the first $m$ elements are always larger than the last $m$ elements. For example, for $m =2$, there are 2 such permutations in $\C_4(123)$, namely 4312 and 3421. By Lemma~\ref{lem:123-take2}, if $\beta \in \C_{n-2}(123)$, then both $4312 \star \beta$ and $3412 \star \beta$ will be in $\C_n(123)$. They are also unique because one of the resulting permutations begins with $n$ while the other begins with $n-1$. We also note that if $\beta' \neq \beta \in \C_{n-2}(123),$ then $4312 \star \beta \neq 4312 \star \beta'$. Thus we have the bound:
\[ c_n(123) \geq 2c_{n-2}(123).\]

Although this is not a very spectacular bound (the growth rate is only $\sqrt{2}$), we can extend this process. Before doing so, we introduce some notation. First we define $A_1$ to be the set of desired permutations in $\C_4(123)$ as discussed above. That is,
\[ A_1 = \{4312, 3421\}.\] We then define $X_{1,n}$ to be the set of permutations formed by using permutations in $A_1$ to build new ones. Formally, we define
\[ X_{1,n} = \{\alpha \star \beta: \alpha \in A_1, \beta \in \C_{n-2}(123)\}.\]
As an example, we have\[ X_{1,6} = \{654132, 645312, 635142, 641532, 564231, 546321, 536241, 542631\},\] 
and in general, we see that $|X_{1,n}| = 2c_{n-2}.$

Moving forward, there are eight permutations in $\C_6(123)$ with the first three elements always larger than the last three elements:
\[ 654132, 654213, 645312, 564231, 465213, 465321, 546132, 546321.\]
All of these permutations can be used to build larger permutations.
However, when defining $A_2$, we want to be careful to avoid any overlap. For instance, $654132 \star 231 = 7645132$ while $4312 \star 53412 = 7645132$ as well. In fact, of the eight permutations listed above, only those that are not already in $X_{1,6}$  will produce unique permutations. To this end, we define $A_2$ in a more careful way. We first define $A_2'$ to be the set containing the eight permutations in $\C_6(123)$ with the first three elements larger than the last three,
\[ A_2' = \{\alpha \in \C_6(123): \{\alpha_1, \alpha_2, \alpha_3\} = \{4,5,6\}\}, \]and then 
\[ A_2 = \{\alpha \in \C_6(123): \alpha \in A_2' \text{ and } \alpha \notin X_{1,6}\}. \] Thus we have that $A_2 = \{654213, 465213, 465321, 546132\}$. We continue by defining
\[ X_{2,n} =  \{\alpha \star \beta: \alpha \in A_2, \beta \in \C_{n-4}(123)\}.\]
As an example, consider $n=7$. We know that $\C_3(123) = \{312, 231\}$, and thus

\begin{align*}
 X_{2,7} &= \{654213 \star 312, 465213 \star 312, 465321 \star 312, 546132 \star 312\} \cup \\
 & \quad \ \{654213 \star 213, 465213\star 213, 465321 \star 213,  546132 \star 213\}\\
 &= \{7652413, 5762413, 6752413, 6573421, 7645213, 5746213, 6745213, 6547321\}.
\end{align*}

In general, we have $|X_{2,n}| = 4c_{n-4}$, and $|X_{1,n} \cup X_{2,n}| = 2c_{n-2} + 4c_{n-4}.$ This process continues to generalize and we formalize the definitions of these sets here as well as the counting results.

\begin{definition}\label{def:123-new} For $n \geq 4,$ let 
\[ A_1 = A_1'= \{4312, 3421\} \quad \text{and} \quad  X_{1,n} = \{\alpha \star \beta: \alpha \in A_1, \beta \in \C_{n-2}(123)\}. \]
For $k \geq 2,$ let
\[ A_k' = \{\alpha \in \C_{2k+2}(123): \{\alpha_1, \ldots, \alpha_{k+1}\} = [k+2,2k+2]\}, \]
\[ A_k = \{\alpha \in A_k' : \alpha \notin X_{1, 2k+2} \cup X_{2, 2k+2} \cup \cdots \cup X_{k-1, 2k+2}\}, \]
and for $n \geq 2k+1$,
\[ X_{k,n} = \{\alpha \star \beta: \alpha \in A_k, \beta \in \C_{n-2k}(123)\}.\]
\end{definition}

\begin{lemma}\label{lem:123-X}
Suppose $A_k$, $A_k'$, and $X_{k,n}$ are as defined in Definition~\ref{def:123-new}. Then 
\begin{enumerate}
\item $X_{i,n} \cap X_{j,n} = \emptyset$ for $i \neq j$, and
\item $|X_{k,n}| = |A_k|c_{n-2k}$ for $k \geq 1$ where $|A_1| = 2$ and 
\[ |A_k| = |A_k'| - \sum_{i=1}^{k-1} |A_i||A_{k-i}'.|\]
for $k \geq 2.$
\end{enumerate}
\end{lemma}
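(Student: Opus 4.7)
The plan is to establish Parts~(1) and~(2) simultaneously by induction on $k$. The base case $k=1$ is immediate: $A_1=\{4312,3421\}$, and since $4312\star\beta$ and $3421\star\beta$ have first entries $n$ and $n-1$ respectively, the map $A_1\times\C_{n-2}(123)\to X_{1,n}$ is a bijection, yielding $|X_{1,n}|=2c_{n-2}$; Part~(1) is vacuous at this level.

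For the inductive step, I first prove injectivity of the map $A_k\times\C_{n-2k}(123)\to X_{k,n}$. Given $\pi=\alpha\star\beta$, Definition~\ref{defn:123} lets us read positions $1,\ldots,k$ of $\pi$ as $\alpha_i+(n-2k-2)$ and positions $n-k+1,\ldots,n$ as $\alpha_{k+3},\ldots,\alpha_{2k+2}$. Since $\alpha\in A_k'$, the entries $\alpha_{k+1}$ and $\alpha_{k+2}$ are forced as the unique missing elements of $[k+2,2k+2]$ and $[1,k+1]$, and then $\beta$ is the reduction of $\pi_{k+1},\ldots,\pi_{n-k}$ after undoing the substitutions $\alpha_{k+1}+(n-2k-2)\mapsto n-2k$ and $\alpha_{k+2}\mapsto 1$. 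This gives $|X_{k,n}|=|A_k|\,c_{n-2k}$.

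For disjointness, I plan to show: if $\pi\in X_{j,n}\cap X_{k,n}$ with $j<k$ and $\pi=\alpha\star\beta=\alpha^*\star\beta^*$ with $\alpha\in A_k$ and $\alpha^*\in A_j$, then $\alpha\in X_{j,2k+2}$, contradicting $\alpha\in A_k$. Matching the first $j$ and last $j$ entries of $\pi$ across the two decompositions pins down $\alpha^*$ from $\alpha$: one has $\alpha^*_i=\alpha_i-2(k-j)$ for $i\in[1,j]$ and $\alpha^*_{j+3},\ldots,\alpha^*_{2j+2}=\alpha_{2k-j+3},\ldots,\alpha_{2k+2}$, and the remaining $\alpha^*_{j+1},\alpha^*_{j+2}$ are forced since $\alpha^*\in A_j'$. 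I then define $\gamma$ as the reduction of $\alpha_{j+1},\ldots,\alpha_{2k-j+2}$ after inverting the appropriate substitutions from Definition~\ref{defn:123}, check that $\alpha^*\star\gamma=\alpha$ as permutations, and verify $\gamma\in A'_{k-j}$ (in particular that $\gamma$ is cyclic, by reversing the cycle-structure computation in the proof of Lemma~\ref{lem:123-take2}).

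For the recursion $|A_k|=|A_k'|-\sum_{i=1}^{k-1}|A_i||A'_{k-i}|$, the core claim is: for $\alpha'\in A_i$ and $\beta'\in\C_{2k+2-2i}(123)$ with $i<k$, one has $\alpha'\star\beta'\in A_k'$ iff $\beta'\in A'_{k-i}$. Tracing Definition~\ref{defn:123}, the first $k+1$ positions of $\alpha'\star\beta'$ consist of $\bar{\alpha'}_1,\ldots,\bar{\alpha'}_i$ (automatically in $[k+2,2k+2]$ via the shift by $2k-2i$) and $\bar{\beta'}_1,\ldots,\bar{\beta'}_{k-i+1}$; the latter lie in $[k+2,2k+2]$ iff $\beta'_\ell\in[k-i+2,2(k-i)+2]$ for $\ell\in[1,k-i+1]$, which is exactly the defining condition of $A'_{k-i}$. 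Combined with the injectivity above and the inductive Part~(1) at length $2k+2$, the sets $A_k'\cap X_{i,2k+2}$ for $i\in[1,k-1]$ partition $A_k'\setminus A_k$ into disjoint pieces of sizes $|A_i||A'_{k-i}|$, producing the recursion. The main obstacle is the disjointness step, specifically verifying that the implicitly-defined $\gamma$ is cyclic and 123-avoiding; this requires carefully tracking $\alpha$'s cycle through the inverse of the construction in Lemma~\ref{lem:123-take2}, rather than the cleaner one-line-notation check that $\alpha^*\star\gamma=\alpha$.
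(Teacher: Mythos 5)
Your proposal follows essentially the same strategy as the paper's own proof, and in some places is more careful. The injectivity of the map $A_k\times\C_{n-2k}(123)\to X_{k,n}$, which you establish by reading $\alpha$ off from positions $[1,k]$ and $[n-k+1,n]$ of $\pi$ (and then reconstructing $\beta$), is only tacitly assumed in the paper; making it explicit is a genuine improvement. The recursion argument, via the identification $A_k'\cap X_{i,2k+2}=\{\alpha'\star\beta' : \alpha'\in A_i,\ \beta'\in A'_{k-i}\}$, is exactly the paper's argument, and your verification that $\alpha'\star\beta'\in A_k'$ iff $\beta'\in A'_{k-i}$ is correct.

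The one place you flag as a weakness --- showing $\gamma\in\C_{2(k-j)+2}(123)$, i.e.\ that the middle block of $\alpha$ that you peel off really does come from a \emph{cyclic} permutation --- is precisely the step the paper itself glosses over. The paper's disjointness argument simply asserts that if $\pi=\alpha\star\beta=\alpha'\star\beta'$ with $|\alpha'|>|\alpha|$, then $\alpha'\in X_{i,2j+2}$, without supplying the converse-of-Lemma~\ref{lem:123-take2} needed to see that the implicitly defined $\gamma$ is cyclic (it also writes $A_j'$ where it means $A_j$, a small typo). So you have correctly located the soft spot in the argument rather than introduced a new one. To close it cleanly one needs to check, by tracing the interleaving of cycle notations in the proof of Lemma~\ref{lem:123-take2}, that if $\alpha'\star\gamma$ is an $(n)$-cycle with $\alpha'$ an $(2j+2)$-cycle satisfying the $A'_j$ hypothesis, then $\gamma$ is an $(n-2j)$-cycle; this is the converse direction and it does hold, but it is work. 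Overall your proposal is the same decomposition-by-prefix/suffix approach as the paper, carried out with somewhat greater care in the injectivity step and with an honest acknowledgment of the cyclicity lemma that both you and the authors would need to spell out in full.
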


\begin{proof}
First, let us note that $X_{i,n}\cap X_{j,n} = \emptyset$ for $i \neq j$ by construction. Indeed, if there were a permutation $\pi$ in the intersection, then we would have $\pi = \alpha\star\beta$ with $\alpha\in\C_{2i+2}(123)$ and $\pi=\alpha'\star\beta'$ with $\alpha'\in\C_{2j+2}(123)$. However, if $i<j$, then we would have that $\alpha'\in X_{i,2j+2}$, contradicting that $\alpha'\in A_j',$ which does not contain $X_{i,2j+2}$ for $i<j$.

Now let us consider the second statement. For $k=1$, we have $|X_{1,n}| = |A_1|c_{n-2}$ as seen in the discussion above. Now suppose $k >1$. Because $X_{i,n} \cap X_{j,n} = \emptyset,$ we have
\[ |A_k| = |A_k'| - \sum_{i=1}^{k-1} |A_k' \cap X_{i, 2k+2}|.\]
Thus, we need to show that $|A_k' \cap X_{i, 2k+2}| = |A_i||A_{k-i}'|.$ To this end, suppose $\pi \in A_k' \cap X_{i, 2k+2}$ for some $i \in [1, k-1].$ Since $\pi \in X_{i, 2k+2},$ there exist $\alpha \in A_i$ and $\beta \in \C_{2k+2-2i}$ with $\pi = \alpha \star \beta.$ Furthermore, since $\pi \in A_k'$, we know the first $k+1$ entries in $\pi$ are in $[k+2, 2k+2].$ Combining these facts tells us the first $k+1-i$ elements of $\beta$ must be larger than the last $k+1-i$ elements and thus $\beta \in A_{k-i}$. Since every element $\pi$ can be written as $\alpha \star \beta$ with $\alpha \in A_i$ and $\beta \in A_{k-i} \subseteq A_{k-i}'$, we have  $|A_k' \cap X_{i, 2k+2}| \leq |A_i||A_{k-i}'|$.
Furthermore, given $\alpha\in A_i$ and $\beta\in A_{k-i}'$, we obtain an element of $X_{i,2k+2}$ with the first $k+1$ elements larger than the second $k+1$ elements, by definition of $\star$, giving us equality.
\end{proof}

\begin{table}
\begin{center}
\begin{tabular}{c|l|l|l}
$k$ & $|A_k'|$ & $\sum_{i=1}^{k-1} |A_i||A_{k-i}'|$ & $|A_k|$ \\ \hline
1 & 2 & 0 & 2  \\
2 & 8 & 4 &4 \\
3 & 44 & 24 & 20\\
4 & 296 & 160 & 136\\
5 & 2252 &1200& 1052\\
6 & 18,874 & 9760 & 9114\\
7 & 169,860 & 85,304 & 84,556 \\
8 & 1,616,942 & 788,824 & 828,118\\
9 &16,102,076&7,642,168&8,459,908
\end{tabular}
\caption{Small values of $A_k$ for Definition~\ref{def:123-new}. Here $A'_k$ are exactly those permutations $\pi$ in $\C_{2k+2}(123)$ with the property that the first $k+1$ elements are larger than the second $k+1$ elements and $A_k$ are those elements of $A'_k$ that cannot be built from the $\star$ operation.}
 \label{fig:123-half}
\end{center}
\end{table}

Small values of $A_k$ can be found by computer and are listed in Table~\ref{fig:123-half}. Using these small values, we can bound $c_n$ for $n \geq 15$ as seen in the main result here.

\begin{theorem}\label{thm:123} Let $n \geq 20$ and let $c_n = |\C_n(123)|$. Then
\[ c_n \geq 2c_{n-2} + 4c_{n-4} + 20c_{n-6} + 136c_{n-8} + 1052c_{n-10} + 9114c_{n-12} + 84556c_{n-14}+828118c_{n-16} + 8459908c_{n-18}\]
which has a growth rate of approximately $2.66612$.
\end{theorem}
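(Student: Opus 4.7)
The plan is to mirror the proofs of Theorems~\ref{thm:312} and~\ref{thm:321}: use Lemma~\ref{lem:123-X} to assemble the contributions of the disjoint sets $X_{k,n}$ for $k=1,\ldots,9$, and then read off the growth rate from the characteristic polynomial of the resulting linear recurrence.

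First, by Lemma~\ref{lem:123-take2}, every element of $X_{k,n}$ lies in $\C_n(123)$, so $\bigcup_{k=1}^{9} X_{k,n} \subseteq \C_n(123)$. By part (1) of Lemma~\ref{lem:123-X}, the sets $X_{k,n}$ are pairwise disjoint, so
\[
c_n \;\geq\; \sum_{k=1}^{9} |X_{k,n}|.
\]
Part (2) of Lemma~\ref{lem:123-X} then gives $|X_{k,n}| = |A_k|\,c_{n-2k}$. The hypothesis $n\geq 20$ ensures $n-2k\geq 2$ for each $k\leq 9$, so each of these $X_{k,n}$ is well-defined. Substituting the nine values $|A_1|=2,\ |A_2|=4,\ |A_3|=20,\ |A_4|=136,\ |A_5|=1052,\ |A_6|=9114,\ |A_7|=84556,\ |A_8|=828118,\ |A_9|=8459908$ from Table~\ref{fig:123-half} yields the claimed inequality.

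For the growth rate, it is standard that if $c_n \geq \sum_j d_j c_{n-j}$ for all large $n$ with nonnegative coefficients $d_j$, then $\gr(c_n)$ is at least the unique positive real root of $x^{M} - \sum_j d_j x^{M-j}=0$, where $M$ is the largest shift appearing. In our case, $M=18$ and only even shifts appear, so I would solve
\[
x^{18} \;=\; 2x^{16} + 4x^{14} + 20x^{12} + 136x^{10} + 1052x^{8} + 9114x^{6} + 84556x^{4} + 828118x^{2} + 8459908,
\]
which is a polynomial in $y=x^{2}$ of degree $9$. A numerical computation (e.g., via \cite{Sage}) shows the largest real root is approximately $2.66612$, giving the stated lower bound on $\gr(c_n(123))$.

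The only nontrivial step is the numerical one at the end, since the inequality itself is an immediate consequence of Lemma~\ref{lem:123-X} once the table of $|A_k|$ values has been computed. The main practical obstacle — already handled in Table~\ref{fig:123-half} — is the computer-assisted enumeration needed to determine $|A_k|$ for $k$ up to $9$; everything else is a routine assembly.
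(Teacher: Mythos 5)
Your proposal is correct and takes essentially the same approach as the paper: cite Lemma~\ref{lem:123-take2} for $X_{k,n}\subseteq\C_n(123)$, invoke Lemma~\ref{lem:123-X} for disjointness and the counting formula $|X_{k,n}|=|A_k|c_{n-2k}$, sum over $k=1,\ldots,9$, and read off the growth rate from the characteristic polynomial. (You actually fix a small slip in the paper's proof, which writes the union up to $k=8$ even though the theorem and Table~\ref{fig:123-half} clearly use values through $k=9$.)
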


\begin{proof} By Lemma~\ref{lem:123-take2}, we know that $X_{k,n} \subseteq \C_n(123).$ Furthermore, by construction, we have $X_{i,n} \cap X_{j,n} = \emptyset$ for $i \neq j.$ Thus,
\begin{align*}
 c_n &\geq \left|\bigcup_{k=1}^8 X_{k,n}\right| = \sum_{k=1}^8 |X_{k,n}| \\
  &= \sum_{k=1}^8 |A_{k}|c_{n-2k}. \end{align*}
Using small values of $|A_k|$ found in  Table~\ref{fig:123-half}, we have the desired result.
\end{proof}


\section{132-avoiding cycles}

In this section, we address the case of 132-avoiding cyclic permutations in a similar way to the previous section. 
We begin with a binary operation $\ostar$ that allows us to augment a 132-avoiding cyclic permutation by another 132-avoiding cyclic permutation that has some special properties.

\begin{definition}\label{defn:132}
    Let $\alpha=\alpha_1\alpha_2\cdots\alpha_{2m} \in \C_{2m}(132)$, and $\beta=\beta_1\beta_2\cdots\beta_{n} \in \C_{n}(132)$ for $n \geq 2$ and $m \geq 2$. Further suppose $\{\alpha_1, \alpha_2, \ldots, \alpha_m\} = [m+1, 2m]$. Define the permutation $\alpha \ostar \beta \in \S_{n + 2m-2}$ by
    \[ \alpha \ostar \beta = \bar{\alpha}_1\bar{\alpha}_2\cdots\bar{\alpha}_{m-1}\bar{\beta}_1\bar{\beta}_2\cdots\bar{\beta}_{n-1}\bar\alpha_{m+1}\bar\alpha_{m+2}\cdots\bar\alpha_{2m}\]
    where 
    \[ \bar{\alpha_i} = \begin{cases}
        \alpha_i+n-2 & i<m \\
        \beta_n +m-1& \alpha_i= m \\
        \alpha_i & i>m \text{ and $\alpha_i\neq m$}
    \end{cases}\]
    and
    \[ \bar{\beta_i} = \begin{cases}
        \beta_i+m-1 & \beta_i\neq n\\
        \alpha_m+n-2 & \beta_i=n.
    \end{cases} \]
\end{definition}
    In other words, we place the first $m-1$ elements of $\alpha$ at the front and the last $m$ elements (except $m$) at the end, placing $\beta_m$ where $m$ used to be, and scale everything appropriately. Let us see an example.

\begin{example}\label{ex:132} Let $\alpha = 654213 =(1,6,3,4,2,5) \in \C_6(132)$ and $\beta = 76821345 = (1,7,4,2,6,3,8,5) \in \C_8(132).$ Then
\[ \alpha \ostar \beta = \fbox{12 \ 11}\   9 \ 8 \ 10 \ 4 \ 3 \ 5 \ 6 \ \fbox{2\ 1}\ 7\]
where the elements from $\alpha$ are in boxes. In cyclic notation, we have
\[ \alpha \ostar \beta = (\fbox{1, 12}, 7,3,9,6,4,8,5,10, \fbox{2, 11}).\]
This example is illustrated in Figure~\ref{fig:132 example}.
\end{example}

\begin{figure}
\centering
\begin{tabular}{c}
  $\alpha=654213$ \\ \begin{tabular}{c}
\begin{tikzpicture}[scale=.5]
\newcommand\myx[1][(0,0)]{\pic at #1 {myx};}
\tikzset{myx/.pic = {\draw 
(-2.5mm,-2.5mm) -- (2.5mm,2.5mm)
(-2.5mm,2.5mm) -- (2.5mm,-2.5mm);}}
\draw[gray] (0,0) grid (6,6);
\foreach \x/\y in {
1/6,
2/5,
3/4,
4/2,
5/1,
6/3
}
\myx[(\x-.5,\y-.5)];
\draw[-, ultra thick,green!70!black] (.5,.5)--(.5,5.5)
--(5.5,5.5)--(5.5,2.5)
--(2.5,2.5)--(2.5,3.5)--(3.5,3.5)--(3.5,1.5)--(1.5,1.5)--(1.5,4.5)--(4.5,4.5)--(4.5,.5)--(.5,.5);
\end{tikzpicture}
\end{tabular} \\[2cm] $\beta = 76821345$ \\
\begin{tabular}{c}
\begin{tikzpicture}[scale=.5]
\newcommand\myx[1][(0,0)]{\pic at #1 {myx};}
\tikzset{myx/.pic = {\draw 
(-2.5mm,-2.5mm) -- (2.5mm,2.5mm)
(-2.5mm,2.5mm) -- (2.5mm,-2.5mm);}}
\draw[gray] (0,0) grid (8,8);
\foreach \x/\y in {
1/7,
2/6,
3/8,
4/2,
5/1,
6/3,
7/4,
8/5
}
\myx[(\x-.5,\y-.5)];
\draw[-, ultra thick,red!70!black] (.5,.5)--(.5,6.5)
--(6.5,6.5)--(6.5,3.5)--(3.5,3.5)--(3.5,1.5)--(1.5,1.5)--(1.5,5.5)--(5.5,5.5)--(5.5,2.5)--(2.5,2.5)--(2.5,7.5)--(7.5,7.5)--(7.5,4.5)
--(4.5,4.5)--(4.5,.5)--(.5,.5);
\end{tikzpicture} \end{tabular}
\end{tabular}
$\longrightarrow$
\begin{tabular}{c}
$\alpha\ostar\beta = 12 \ 11 \ 9 \ 8 \ 10 \ 4 \ 3 \ 5 \ 6 \ 2\ 1 \ 7$ \\
\begin{tikzpicture}[scale=.5]
\newcommand\myx[1][(0,0)]{\pic at #1 {myx};}
\tikzset{myx/.pic = {\draw 
(-2.5mm,-2.5mm) -- (2.5mm,2.5mm)
(-2.5mm,2.5mm) -- (2.5mm,-2.5mm);}}
\draw[gray] (0,0) grid (12,12);
\foreach \x/\y in {
1/12,
2/11,
3/9,
4/8,
5/10,
6/4,
7/3,
8/5,
9/6,
10/2,
11/1,
12/7
}
\myx[(\x-.5,\y-.5)];
\draw[-, ultra thick,green!70!black] 
(.5,.5)--(.5,11.5)--(11.5,11.5)--(11.5,10)--(11.5,6.5) -- (10,6.5);
\draw[-, ultra thick,red!70!black] (10,6.5)--(6.5,6.5)--(6.5,2.5)--(2.5,2.5)--(2.5,8.5)--(8.5,8.5)--(8.5,5.5)--(5.5,5.5)--(5.5,3.5)--(3.5,3.5)--(3.5,7.5)--(7.5,7.5)--(7.5,4.5)--(4.5,4.5)--(4.5,9.5)--(9.5,9.5)--(9.5,3);
\draw[-, ultra thick,green!70!black](9.5,3)--(9.5,1.5)--(1.5,1.5)--(1.5,10.5)--(10.5,10.5)--(10.5,.5)--(.5,.5);
\end{tikzpicture} 
\end{tabular}
\caption{This example demonstrates the operation defined in Definition~\ref{defn:132}. Here $\beta\in\C_8(132)$ and $\alpha\in\C_6(132)$ with the first three elements of $\alpha$ all greater than the second three elements of $\alpha$. Thus we can compute 
$\alpha\,$\circled[0]{$\star$}$\,\beta$, which places the large elements of $\alpha$ in front and the small elements of $\alpha$ at the end, with $\beta$ in the middle 
(with two of the elements from both $\alpha$ and $\beta$ ``merged'' to keep it a cycle). Thus $\alpha$ is ``wrapped around'' $\beta$ and the result is a 132-avoiding cyclic permutation of 
length 12. }
\label{fig:132 example}
\end{figure}

Notice that in Example~\ref{ex:132}, we have that $\alpha\ostar\beta$ is still cyclic and avoids 132. In fact, this is always the case, as seen in the next lemma.

\begin{lemma}\label{lem:132-cyc}
    Let $m\geq 2$, $n\geq 2$. Let $\alpha=\alpha_1\ldots\alpha_{2m}\in\C_{2m}(132)$ and $\beta\in\C_n(132)$ with the additional condition that $\{\alpha_1,\ldots, \alpha_{m}\}=[m+1,2m].$ Then $\alpha\ostar\beta\in\C_{n+2m-2}(132).$
\end{lemma}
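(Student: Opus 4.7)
My plan is to prove that $\pi := \alpha \ostar \beta$ avoids $132$ and is cyclic, handling the two properties separately. Set $N = n + 2m - 2$ and partition the positions of $\pi$ into three blocks $F = [1, m-1]$, $M = [m, m+n-2]$, and $L = [m+n-1, N]$. Using the hypothesis $\{\alpha_1,\ldots,\alpha_m\} = [m+1,2m]$ together with the definition, one checks that the values on $F$ lie in $[m+n-1, N]\setminus\{\alpha_m+n-2\}$, the values on $L$ lie in $[1, m-1] \cup \{\beta_n+m-1\}$, and the values on $M$ consist of $[m, n+m-2]\setminus\{\beta_n+m-1\}$ together with the single high value $\alpha_m + n - 2$ sitting at the position where $\beta$ has value $n$. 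The two exceptional placements -- the high value $\alpha_m + n - 2$ inside $M$ and the medium value $\beta_n + m - 1$ inside $L$ -- are what will require care in the case analysis.

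For 132-avoidance, given a putative pattern $\pi_{p_1} < \pi_{p_3} < \pi_{p_2}$ at positions $p_1 < p_2 < p_3$, the block inequalities (values in $F$ strictly exceed $M \cup L$ except for the anomalous $\alpha_m + n - 2$; values in $L$ are strictly dominated by $F \cup M$ except for the anomalous $\beta_n + m - 1$) rule out most cross-block configurations immediately. A clean organizing principle is to case on the block of $p_2$, which must carry the largest of the three values. Patterns entirely within one block reduce to 132 patterns in $\alpha$ or $\beta$, because the shift by $n - 2$ on $F$, the shift by $m - 1$ on $M$, and the two anomalous replacements all preserve the relative orders of the surviving values. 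The remaining cross-block cases must all involve one of the two anomalous values, and in each such configuration a short computation produces either a 132 pattern in $\alpha$ (with $m$ playing a role) or a 132 pattern in $\beta$ (with $n$ playing one), contradicting the hypotheses. The value inequality $\alpha_m + n - 2 > \beta_n + m - 1$ -- which follows from $\alpha_m > m$ and $\beta_n < n$ -- is used repeatedly to compare the two anomalies.

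For cyclicity, I would follow the template of the proof of Lemma~\ref{lem:123-take2}. Write $\alpha = (1, a_2, \ldots, a_{2m})$ and $\beta = (1, b_2, \ldots, b_n)$ in cycle notation, let $r$ be the cycle position of $m$ in $\alpha$, and let $t$ be the cycle position of $n$ in $\beta$. The definition of $\ostar$ says, in essence, that $\pi$ is obtained by splicing the $\beta$-cycle -- with each $b_i$ replaced by $b_i + m - 1$ except that $n$ becomes $\alpha_m + n - 2$ -- into the $\alpha$-cycle -- with the first-block values shifted by $n - 2$ and the value $m$ replaced by $\beta_n + m - 1$ -- at the splice points dictated by the two anomalous values. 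I would write the explicit cycle of $\pi$ with a separate case depending on whether $m$ precedes $\alpha_m$ in the cycle of $\alpha$ and whether $n$ precedes $\beta_n$ in the cycle of $\beta$, then verify that the resulting concatenated list enumerates $[1, N]$ exactly once, forcing $\pi$ to be a single $N$-cycle.

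The hardest part will be the bookkeeping around the two anomalous values, in both halves of the argument. In the avoidance half, overlooking a mixed configuration is easy; casing on the block of $p_2$ collapses most subcases through the coarse block inequalities, leaving only a handful of subcases in which an anomalous value plays the role of $\pi_{p_2}$ or $\pi_{p_3}$, and each of these must be checked directly. In the cyclicity half, the delicate point is to confirm that the single splice of the $\beta$-cycle into the $\alpha$-cycle produces one cycle rather than two, which amounts to verifying that the inserted $\beta$-cycle enters and exits the $\alpha$-cycle exactly once, at the arrow that previously read $m \to \alpha_m$ in $\alpha$.
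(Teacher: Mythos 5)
Your overall plan is the paper's: verify 132-avoidance by a block analysis and verify cyclicity by writing out the cycle notation of the product explicitly. The avoidance half is in fact carried out in more detail than the paper's (the paper simply asserts avoidance is immediate from the definition), and casing on the block of $p_2$ while tracking the two anomalous values $\alpha_m+n-2$ and $\beta_n+m-1$ is a sound way to supply that detail.

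The cyclicity half, however, contains a misconception. The proposed case split on ``whether $m$ precedes $\alpha_m$ in the cycle of $\alpha$'' is vacuous: since $\alpha_m = \alpha(m)$, and the paper's cycle notation $(1, c_2, \ldots, c_{2m})$ is defined by $c_{i+1} = \alpha(c_i)$, the value $m$ is \emph{always} the immediate predecessor of $\alpha_m$ in the cycle of $\alpha$, and likewise $n$ always immediately precedes $\beta_n$ in the cycle of $\beta$. The two-case structure in the proof of Lemma~\ref{lem:123-take2} is a false analogy: there the split is on the relative cycle order of the two \emph{distinct} values $m$ and $m+1$, which genuinely varies, whereas here the two relevant elements are a value and its image, which are forced to be adjacent. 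Once this is recognized, a single candidate cycle suffices, which is precisely what the paper writes. A second caution: checking that your spliced list enumerates $[1, N]$ once only shows it is \emph{some} $N$-cycle; the substantive step is the position-by-position verification that the permutation encoded by that cycle equals $\alpha \ostar \beta$ as defined, and that comparison cannot be omitted.
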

\begin{proof}
    It is clear from the definition that $\alpha\ostar\beta$ avoids 132 if both $\alpha$ and $\beta$ do. It remains to show that $\alpha\ostar\beta$ is cyclic. Let
    \[\alpha=(1,a_2,\ldots, a_r, m, a_{r+2},\ldots, a_{2m})\] and 
    \[
    \beta=(1,b_2,\ldots, b_s,n,b_{s+2},\ldots, b_n)
    \]
    and consider the permutation 
    \[
    \gamma = (1,a_2',\ldots, a_r',b'_{s+2}, \ldots b'_n,b_1',b'_2\ldots, b'_{s},a'_{r+2},a'_{r+3}, \ldots, a'_{2m}) 
    \]
    where $b_i'=b_i+m-1$ and $a_i'=a_i$ if $a_i<m$ and $a_{i}'=a_i+n-2$ if $a_i>m$.
     We will show that $\gamma$ is exactly $\alpha\ostar\beta$ and thus $\alpha\ostar\beta$ is cyclic.
     First, note that $b'_{s+2} = \beta_n+m-1$ and $a'_{r+2} = \alpha_m+n-2$.  Note that if $\alpha_i=m$, then $\gamma_{n+i-2}=\beta_{n+m-1}$ and that if $\beta_j=n$, then $\gamma_{j+m-1} = \alpha_m+n-2$, as desired. Furthermore, $b_i\in[m,m+n-2]$ and $a_i \in [1,m-1]\cup[m+n-1,2m+n-2]$ (with $a_1=1$), so we have $\gamma_j=\beta_{j+m-1}+m-1$ for all $j$ with $\beta_j\neq n$, and similarly for the values of $\alpha,$ appropriately scaled. Thus $\gamma=\alpha\ostar\beta$, by definition.
\end{proof}

Similar to the previous section, we define sets $A_k$, $A'_k,$ and $X_{k,n}$. In particular, let 
\[
A_1=\{4312, 3421\}
\] and let 
\[
X_{1,n} = \{\alpha\ostar\beta : \alpha\in A_1, \beta\in\C_{n-2}(132)\}.
\]
Therefore $X_{1,n}$ are exactly those permutations formed by $4312\ostar\beta$, which involves taking a permutation $\beta\in\C_{n-2}(132)$ and inserting $n$ at the beginning and $1$ before the last spot, and $3421\ostar\beta$, which involves taking a permutation $\beta\in\C_{n-2}(132)$ and inserting 1 in the last spot and $n-1$ into the first spot. 
For example, 
\[X_{1,6} = \{652413, 654213,645312,634512, 562431,564231,546321,534621\}. \]
In general, since it is clear that for $\alpha, \alpha'\in A_1$, $\alpha\ostar\beta$ and $\alpha'\ostar\beta'$ are only equal if $\alpha=\alpha'$ and $\beta=\beta'$, we must have $|X_{1,n}|=2c_{n-2}.$ As with the case when $\sigma=123$, when moving forward, we want to avoid overlap. For this reason, we define 
\[A_k' = \{\alpha\in\C_{2k+2} : \{\alpha_1, \ldots, \alpha_{k+1}\} =[k+2,2k+2]\}\]
 and 
\[A_k = \bigg\{\alpha\in\C_{2k+2} : 
\alpha\in A_k' \text{ and } \alpha\not\in \bigcup_{j<k}X_{j,2k+k}\bigg\}.\]
where
\[
X_{k,n} = \{\alpha\ostar\beta : \alpha\in A_k, \beta\in\C_{n-2k}(132)\}.
\]
This leads us to the following lemma. Because the proof of this lemma is essentially identical to the proof of Lemma~\ref{lem:123-X}, we omit it here.
\begin{lemma}\label{lem:132-Ak} For $n \geq 4$ and $k \geq 1,$ let $A'_k$, $A_k$, and $X_{k,n}$ be defined as above.
Then, $|X_{k,n}| = |A_k|c_{n-2k}(132)$ for $k \geq 1$ where
\[ |A_k| = |A_k'| - \sum_{i=1}^{k-1} |A_i||A_{k-i}'|.\]
\end{lemma}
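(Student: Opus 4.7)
The plan is to mirror the proof of Lemma~\ref{lem:123-X} almost verbatim, since the operation $\ostar$ has the same outer ``wrapping'' structure as $\star$: the differences are entirely in how the merged entries are placed, which is a bookkeeping issue rather than a structural one. Three statements must be established: (i) $X_{i,n} \cap X_{j,n} = \emptyset$ whenever $i \neq j$; (ii) $|X_{k,n}| = |A_k|c_{n-2k}(132)$; and (iii) the recursion $|A_k| = |A_k'| - \sum_{i=1}^{k-1}|A_i||A_{k-i}'|$ (with the convention that the sum is empty for $k=1$, giving $|A_1|=|A_1'|=2$).

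For (i), I would assume $i<j$ and suppose for contradiction that $\pi\in X_{i,n}\cap X_{j,n}$. Then $\pi=\alpha\ostar\beta=\alpha'\ostar\beta'$ for some $\alpha\in A_i$ and $\alpha'\in A_j$, and reading off the outer positions of $\pi$ recovers $\alpha'$ as $\alpha\ostar\tilde\beta$ for some $\tilde\beta$, placing $\alpha'\in X_{i,2j+2}$ and contradicting $\alpha'\in A_j$. For (ii), the operation $\ostar:A_k\times\C_{n-2k}(132)\to X_{k,n}$ is a bijection: surjectivity holds by definition, and injectivity follows because one can recover $\alpha$ from the first $k$ and last $k+1$ positions of $\pi$ and $\beta$ from the middle positions, restoring the merged entries to their canonical values in each case.

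For (iii), disjointness gives $|A_k| = |A_k'| - \sum_{i=1}^{k-1}|A_k'\cap X_{i,2k+2}|$, so it suffices to prove $|A_k'\cap X_{i,2k+2}| = |A_i||A_{k-i}'|$. Write $m=i+1$ and $N=2(k-i)+2$, and take $\pi=\alpha\ostar\beta\in A_k'\cap X_{i,2k+2}$ with $\alpha\in A_i$ and $\beta\in\C_N(132)$. The first $m-1=i$ entries of $\pi$ are the top half of $\alpha$ shifted up by $N-2$, so they automatically lie in $[2k-i+2,2k+2]\subseteq[k+2,2k+2]$. The condition $\pi\in A_k'$ therefore reduces to the requirement that $\bar\beta_1,\ldots,\bar\beta_{k-i+1}$ also lie in $[k+2,2k+2]$. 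A direct calculation from the definitions of $\bar\beta_j$ shows this is equivalent to $\{\beta_1,\ldots,\beta_{k-i+1}\}=[(k-i)+2,N]$, i.e., $\beta\in A_{k-i}'$, and conversely any pair $(\alpha,\beta)\in A_i\times A_{k-i}'$ yields such a $\pi$.

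The main obstacle is the bookkeeping for the merged entries: when $\beta_{j_0}=N$, the value $\bar\beta_{j_0}=\alpha_m+N-2$ sits at the ``seam'' between $\alpha$ and $\beta$ and must still fall in $[k+2,2k+2]$. Since $\alpha_m\in[m+1,2m]$ (because $\{\alpha_1,\ldots,\alpha_m\}=[m+1,2m]$ for $\alpha\in A_i$), this merged value lies in $[2k-i+2,2k+2]$ automatically, so no exceptional case arises and the argument of Lemma~\ref{lem:123-X} translates directly.
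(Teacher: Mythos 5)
Your proof is correct and mirrors the approach the paper intends: the authors themselves omit the proof, stating only that it is ``essentially identical to the proof of Lemma~\ref{lem:123-X},'' and you have carried out exactly that translation, verifying that the merged-entry bookkeeping for $\ostar$ introduces no new cases. The explicit range calculation confirming $\bar\beta_{j_0}=\alpha_m+N-2\in[2k-i+2,2k+2]$ for the seam entry is a useful addition that the paper leaves implicit.
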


\begin{table}
\begin{center}
\begin{tabular}{c|l|l|l}
$k$ & $|A_k'|$ & $\sum_{i=1}^{k-1} |A_i||A_{k-i}'|$ & $|A_k|$ \\ \hline
1 & 2 & 0 & 2  \\
2 & 8 & 4 &4 \\
3 & 44 & 24 & 20\\
4 & 294 & 160 & 134\\
5 & 2242 &1196& 1046\\
6 & 18,800 & 9704 & 9096\\
7 & 169,436 & 84,904 & 84,532 \\ 
8 &  1,616,070 & 786,164 & 829,906 \\
\end{tabular}
\caption{Small values of $A_k$ for Lemma~\ref{lem:132-Ak}. Here $A'_k$ are exactly those permutations $\pi$ in $\C_{2k+2}(132)$ with the property that the first $k+1$ elements are larger than the second $k+1$ elements and $A_k$ are those elements of $A'_k$ that cannot be built from the \circled[0]{$\star$} operation.}
 \label{fig:132-half}
\end{center}
\end{table} 

Small values of $A_k$ can be  computed directly and are listed in Table~\ref{fig:132-half}. Using these small values, we can bound $c_n(132)$ for $n \geq 15$ as seen in the main result here.

\begin{theorem}\label{thm:132} Let $n \geq 18$ and let $c_n = |\C_n(132)|$. Then
\[ c_n \geq 2c_{n-2} + 4c_{n-4} + 20c_{n-6} + 134c_{n-8} + 1046c_{n-10} + 9096c_{n-12} + 84532c_{n-14} + 829906c_{n-16}\]
which has a growth rate of approximately 2.60078.
\end{theorem}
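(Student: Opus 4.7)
The plan is to replay the argument that proved Theorem~\ref{thm:123} with the 132-specific ingredients now in place. By Lemma~\ref{lem:132-cyc}, each $X_{k,n}$ is a subset of $\C_n(132)$, and by the construction of $A_k$ the sets $X_{k,n}$ for different values of $k$ are pairwise disjoint. First I would verify the disjointness in the same way as in Lemma~\ref{lem:123-X}: if $\pi \in X_{i,n}\cap X_{j,n}$ with $i<j$, then $\pi = \alpha\ostar\beta = \alpha'\ostar\beta'$ with $\alpha\in A_i$ and $\alpha'\in A_j\subseteq A'_j$, but the leading $i-1$ and trailing $i+1$ coordinates of $\pi$ force $\alpha' \in X_{i,2j+2}$, contradicting $\alpha'\in A_j$.

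With disjointness in hand, combining Lemma~\ref{lem:132-cyc} and Lemma~\ref{lem:132-Ak} gives
\begin{align*}
c_n \;\geq\; \left|\bigcup_{k=1}^{8} X_{k,n}\right| \;=\; \sum_{k=1}^{8}|X_{k,n}| \;=\; \sum_{k=1}^{8}|A_k|\, c_{n-2k},
\end{align*}
and substituting the values $|A_k| = 2, 4, 20, 134, 1046, 9096, 84532, 829906$ from Table~\ref{fig:132-half} for $k=1,\ldots,8$ yields the stated recurrence directly.

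For the growth-rate estimate, I would invoke the standard fact that a sequence satisfying a linear recurrence with nonnegative coefficients and positive initial values grows at least as fast as the largest real root of its associated characteristic polynomial, which here is
\begin{align*}
p(x) \;=\; x^{16} - 2x^{14} - 4x^{12} - 20x^{10} - 134x^{8} - 1046x^{6} - 9096x^{4} - 84532x^{2} - 829906.
\end{align*}
A numerical computation of the roots of $p$ gives a dominant real root of approximately $2.60078$, completing the proof. The main obstacle is essentially just trusting the computer-generated values in Table~\ref{fig:132-half}; the theorem is a computational consequence of Lemmas~\ref{lem:132-cyc} and~\ref{lem:132-Ak} together with the disjointness observation, so no new structural ideas are required beyond those already developed for the analogous 123-avoiding case.
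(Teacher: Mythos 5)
Your proposal is correct and follows essentially the same route as the paper: cite Lemma~\ref{lem:132-cyc} to get $X_{k,n}\subseteq\C_n(132)$, disjointness by construction (Lemma~\ref{lem:132-Ak}, whose proof the paper notes is identical to Lemma~\ref{lem:123-X}), then sum $|A_k|c_{n-2k}$ over $k=1,\ldots,8$ using Table~\ref{fig:132-half}. The only cosmetic slip is in your disjointness sketch, where for $\alpha\in A_i$ (so $|\alpha|=2i+2$, $m=i+1$) the operation $\ostar$ places $m-1=i$ elements of $\alpha$ at the front, not $i-1$; this does not affect the argument, and your explicit mention of the characteristic polynomial for the growth-rate bound is a reasonable (if unstated in the paper) elaboration.
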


\begin{proof} By Lemma~\ref{lem:132-cyc}, we know that $X_{k,n} \subseteq \C_n(132).$ Furthermore, by construction, we have $X_{i,n} \cap X_{j,n} = \emptyset$ for $i \neq j.$ Thus,
\begin{align*}
 c_n &\geq \left|\bigcup_{k=1}^8 X_{k,n}\right| = \sum_{k=1}^8 |X_{k,n}| \\
  &= \sum_{k=1}^8 |A_{k}|c_{n-2k}. \end{align*}
Using small values of $|A_k|$ found in  Table~\ref{fig:132-half}, we have the desired result.
\end{proof}

Finally, let us notice that doing this procedure does not prove B\'{o}na and Cory's conjecture that $c_n(132)\geq 2c_{n-1}(132).$ However, we can augment a single cyclic permutation avoiding 132 in the middle (instead of the front and end of the permutation) to obtain two distinct cyclic 132-avoiding permutations. This process is also easily reversible, and thus would prove that $c_n(132)\geq 2c_{n-1}(132).$

In order to determine exactly where to augment a given permutation, we define the (lower left) Dyck path associated to a 132-avoiding cyclic permutation.
In particular, we consider the bijection between 132-avoiding permutations and Dyck paths given in \cite{K01}, defined in the following way. For $\pi\in\S_n(132)$, we
define $D(\pi)$ to be the Dyck path obtained by reading the permutation $\pi$ left-to-right, with left-to-right minima in positions $\{i_1,\ldots, i_k\}.$ In this case, for the $j$th left-to-right minima, write $u^\ell d^m$ where $\ell = \pi_{i_j}-\pi_{i_{j-1}}$ (with the convention that $\pi_{i_0}=n+1$) and $m=i_{j+1}-i_j$ (with the convention that $i_{k+1}=n+1$). 
For example, if $\pi = 76821345$, then the left-to-right minima occur in positions $\{1,2,4,5\}$. Therefore $D(\pi)$ starts with $uud$ since $n+1-\pi_1=2$ and $2-1=1.$ This is followed by $udd$ since $\pi_1-\pi_2=7-6=1$ and $4-2=2$. This continues and we obtain
$D(\pi) = uududduuuududddd$. The Dyck path $D(\pi)$ can also be easily observed from the diagram of the permutation, illustrated in Figure~\ref{fig:Dyck}.

\begin{figure}
\centering
\begin{tabular}{cc}
\begin{tabular}{c}
$\pi = 76821345$ \\
\begin{tikzpicture}[scale=.5]
\newcommand\myx[1][(0,0)]{\pic at #1 {myx};}
\tikzset{myx/.pic = {\draw 
(-2.5mm,-2.5mm) -- (2.5mm,2.5mm)
(-2.5mm,2.5mm) -- (2.5mm,-2.5mm);}}
\draw[gray] (0,0) grid (8,8);
\foreach \x/\y in {
1/7,
2/6,
3/8,
4/2,
5/1,
6/3,
7/4,
8/5
}
\myx[(\x-.5,\y-.5)];
\draw[-, ultra thick,cyan!70!black] (0,8)--(0,6)--(1,6)--(1,5)--(3,5)--(3,1)--(4,1)--(4,0)--(8,0);
\draw[dashed, ultra thick,magenta!70!black] (0,0)--(8,8);
\end{tikzpicture} \end{tabular} 
&
\begin{tabular}{c}
$p(D_L(\pi)) = 8  7  9   5   2  1 3 4 5$ \\
\begin{tikzpicture}[scale=.5]
\newcommand\myx[1][(0,0)]{\pic at #1 {myx};}
\tikzset{myx/.pic = {\draw 
(-2.5mm,-2.5mm) -- (2.5mm,2.5mm)
(-2.5mm,2.5mm) -- (2.5mm,-2.5mm);}}
\draw[gray] (0,0) grid (9,9);
\foreach \x/\y in {
1/8,
2/7,
3/9,
4/5,
5/2,
6/1,
7/3,
8/4,
9/6
}
\myx[(\x-.5,\y-.5)];
\draw[-, ultra thick,cyan!70!black] (0,9)--(0,7)--(1,7)--(1,6)-- (3,6)--(3,5);
\draw[-, ultra thick,orange!90!black] (3,5)
--(3,4)--(4,4);
\draw[-, ultra thick,cyan!70!black] (4,4)--(4,1)--(5,1)--(5,0)--(9,0);
\draw[dashed, ultra thick,magenta!70!black] (0,0)--(9,9);
\end{tikzpicture} 
\end{tabular} 
\end{tabular} 
\caption{On the left, an example of a 132-avoiding permutation, $\pi = 76821345$, together with Dyck path $D(\pi)$. On the right, the permutation $p(D_L(\pi)) = 8 7  9   5   2  1 3 4 5$ associated to the Dyck path obtained by adding $ud$ in the correct place on the original Dyck path for $\pi$. The resulting permutation is still cyclic and still avoids the pattern 132.}
    \label{fig:Dyck}
\end{figure}

We let $p(D)$ be the 132-avoiding permutation associated to Dyck path D. 
Suppose for a Dyck path $D$, the height after the $k$-th down step is $h_k.$ Then we can find $\pi=p(D)$ by noting that the number of elements after $\pi_k$ that are greater than it is equal to $h_k$. For example, if $D=uuuudduduudddd$, then $h_1=3$, $h_2=2,$ $h_3=2, h_4=3,h_5=2, h_6=1,$ and $h_7=0,$ so $\pi_1=4$ since there must be three elements to the right of it greater than it. Similarly, $\pi_2=5$ since there must be two elements (6 and 7) to its right. Filling out the rest, we get $\pi = 4531267.$ 

Instead of augmenting a 132-avoiding permutation directly, we will augment its associated Dyck path by adding $ud$ to a particular position. First, let us see what this does to the corresponding permutation. Let $\pi\in\S_n(132)$ with $D=D(\pi)$ and let $D'$ be obtained by inserting $ud$ into $D$. Then if $m$ $d$'s appear before this insertion and $r$ $u$'s appear after this insertion, it is straightforward to check that $\pi' = p(D')$ is obtained from $\pi$ by inserting the element $r$ into $\pi$ after position $m$.

Now, define $D_L(\pi)$ to be the Dyck path defined by inserting $ud$ after the $(n-1)$st element of $D(\pi)$. Define $D_R(\pi)$ to be the Dyck path defined by inserting $ud$ after the $(n+1)$th element of $D(\pi)$. Via the bijection described above, it is clear that these Dyck paths will still be associated to a 132-avoiding permutation; it is necessary for us to show that if the original Dyck path was associated to a cyclic permutation, then the augmented Dyck path is as well. 
    However, in the case of $D_L(\pi)$, this amounts to inserting $j+1$ into position $j$ of the one-line form. In this case, the cycle form changes only by $j+1$ being inserted after $j$ in the cycle form and nothing else changes. Similarly, in the case of $D_R(\pi)$, $j-1$ is inserted into position $j,$ thus keeping the permutation cyclic.

Finally, for any $\pi, \tau \in \C_n(132)$, we cannot have $D_L(\pi) = D_R(\tau).$
    If this were possible, then we would have a $ud$ appear on either side of the line $y=x$ in the diagram of the permutation, implying there is a 2-cycles in $\pi$, contradicting that the permutation is cyclic.

\begin{theorem}\label{thm:132-dyck}
    For $n\geq 3$, \[c_n(132)\geq 2c_{n-1}(132)\]
\end{theorem}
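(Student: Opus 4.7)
The plan is to build two injections $\phi_L,\phi_R\colon\C_{n-1}(132)\to\C_n(132)$ with disjoint images, immediately yielding $c_n(132)\ge 2c_{n-1}(132)$. For $\pi\in\C_{n-1}(132)$, I would define $\phi_L(\pi)=p(D_L(\pi))$ and $\phi_R(\pi)=p(D_R(\pi))$, where $D_L$, $D_R$ are the Dyck-path augmentations defined just above the theorem statement and $p$ is the inverse of the bijection $D$.

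The first step is to check that $\phi_L$ and $\phi_R$ take values in $\C_n(132)$. Because $D_L(\pi)$ and $D_R(\pi)$ are bona fide Dyck paths of length $2n$, the $132$-avoidance of their $p$-images is automatic. For cyclicity, the preceding discussion identifies the one-line effect of inserting $ud$ into a Dyck path via the counts $m$ and $r$: in the $D_L$ case a new value $j+1$ is inserted at position $j$ for a specific $j$ read off from $D(\pi)$, and after the appropriate relabeling of the values $\ge j+1$, the cycle notation of the output is obtained from that of $\pi$ by splicing $j+1$ into the existing $(n-1)$-cycle immediately after $j$, producing a single $n$-cycle. The $D_R$ case is analogous, inserting $j-1$ at position $j$ and again extending the cycle by one element.

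Second, injectivity of $\phi_L$ and $\phi_R$ is immediate, since $p$ is a bijection and each of $D_L$, $D_R$ inserts $ud$ at a fixed index of the Dyck path, so distinct inputs produce distinct outputs. Third, for disjointness of images, I would invoke the observation already isolated in the excerpt: if $\phi_L(\pi)=\phi_R(\tau)$ for some $\pi,\tau\in\C_{n-1}(132)$, then the common Dyck path exhibits a $ud$ on both sides of the diagonal $y=x$ in the permutation diagram, which forces a $2$-cycle in the common permutation and contradicts cyclicity.

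Putting these three pieces together,
\[ c_n(132)\ge|\phi_L(\C_{n-1}(132))|+|\phi_R(\C_{n-1}(132))| = 2c_{n-1}(132). \]
The main obstacle in fleshing out the argument is the cycle-preservation step: one must compute $j$ precisely from $D(\pi)$ and verify that splicing $j+1$ after $j$ (respectively, $j-1$ after the preimage of $j$) in cycle notation really extends the single cycle of $\pi$ rather than breaking it into two. Once this bookkeeping is in place, injectivity and disjointness are essentially one-liners and the counting bound follows at once from the setup.
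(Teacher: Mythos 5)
Your proposal is correct and takes essentially the same route as the paper: both define $\phi_L(\pi)=p(D_L(\pi))$ and $\phi_R(\pi)=p(D_R(\pi))$, verify membership in $\C_n(132)$ via the ``insert $j+1$ after $j$'' (resp.\ $j-1$) splicing of the cycle, argue injectivity from the bijection $p$ together with the fixed insertion index, and obtain disjointness from the observation that a $ud$ step straddling $y=x$ on both sides would force a $2$-cycle. The only difference is organizational: you explicitly frame the argument as two injections with disjoint images, which is a cleaner packaging of what the paper establishes in the surrounding discussion.
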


\begin{proof}
Let us augment $\pi\in\C_{n-1}(132)$ by considering $\pi'=p(D_L(\pi))$ and $\pi''=p(D_R(\pi))$. It is clear that there are $c_{n-1}(132)$ for each of left and right augmentation and as described above, we know that there is no intersection. Since the process is easily invertible, the result follows.
\end{proof}

Finally, we note that the process described here of adding elements to the ``middle'' of the permutation can be extended, but it is less clear in this context how to track those permutations obtained by adding in the middle in two different ways (i.e., counting the intersection). Also, the Dyck path perspective can also be considered for $\sigma=123,$ but in this case, we must be careful when augmenting the Dyck path to avoid a 123. In particular, $ud$ could not be added between a $u$ and $d$ in the $\sigma=123$ case. For that reason, we cannot easily obtain $c_n(123)\geq 2c_{n-1}(123)$ using Dyck paths.

\section{Discussion}

The question of enumerating those cyclic permutations that avoid a single pattern of length 3 remains open, as does enumerating cyclic permtuations avoiding the pair $\{132,213\}$. The values of $c_n(\sigma)$ for $\sigma\in\S_3$ appear in Table~\ref{tab:discussion}. In this paper, we demonstrate that these permutation do exhibit some interesting structure which may make them amenable to enumeration, given the right techniques.

\begin{table}[h]
    \centering
    \begin{tabular}{c||c|c|c|c|c|c}
         $\sigma$& 123 & 132 & 213 & 231 & 312 & 321  \\ \hline\hline
         2& 1&1&1&1&1&1 \\ \hline
         3& 2&2&2&1&1&2 \\ \hline
         4& 4&4&4&2&2&4 \\ \hline
         5& 10&10&10&5&5&10 \\ \hline
         6& 24&24&24&12&12&24 \\ \hline
         7& 68&68&68&30&30&66 \\ \hline
         8& 188&182&182&86&86&178 \\ \hline
         9& 586&544&544&253&253&512 \\ \hline
         10& 1722&1574&1574&748&748&1486 \\ \hline
         11& 5492 & 4888 & 4888 & 2274 & 2274 & 4446 \\ \hline
         12 & 16924 &  14864 & 14864 &7152 & 7152  & 13468 \\ \hline
         13 & 55582 & 47610 & 47610 & 22890 & 22890 & 41648 \\ \hline
         OEIS & A309504 &A309505 & A309505& A309506& A309506& A309508
    \end{tabular}
    \caption{The values of $c_n(\sigma)$ for all $\sigma\in\S_3$ for $2\leq n \leq 13$. Values for $0\leq n \leq 24$ are listed on OEIS, provided by user Andrew Howroyd \cite{OEIS}.}
    \label{tab:discussion}
\end{table}

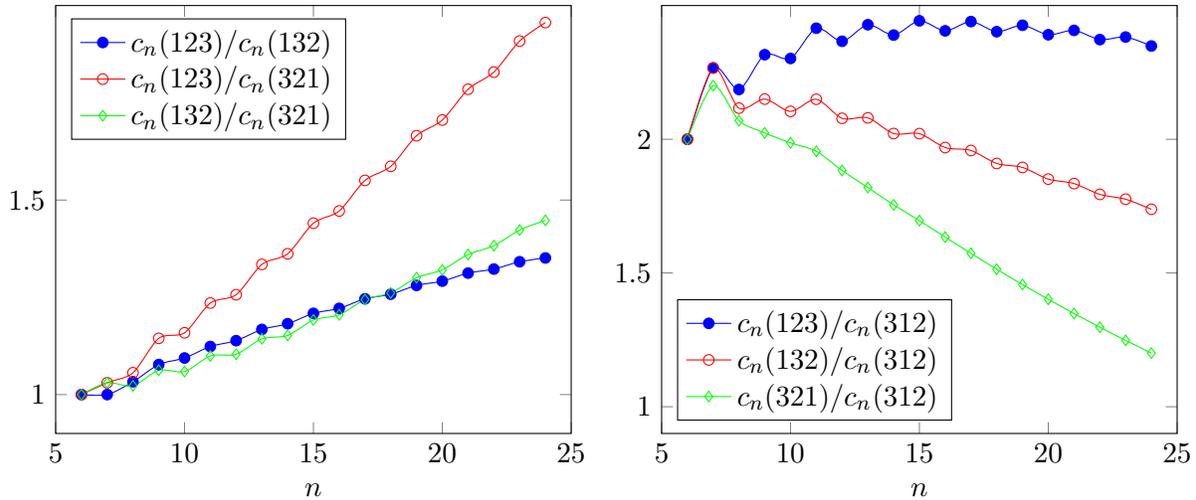
\begin{figure}
    \centering
    \begin{tikzpicture}
    \begin{axis}[
        xlabel=$n$,
        xmin=5, xmax=25,
        ymin=.9, ymax=2,
        xtick={5,10,15,20,25},
        ytick={0,.5,1,1.5},
        legend pos=north west
        ]
    \addplot[smooth,mark=*,blue] plot coordinates { 
        (6, 1.00000000000000) (7, 1.00000000000000) (8, 1.03296703296703) (9, 1.07720588235294) (10, 1.09402795425667) (11, 1.12356792144026) (12, 1.13858988159311) (13, 1.16744381432472) (14, 1.18213704622443) (15, 1.20873847605337) (16, 1.22156917669787) (17, 1.24618607932113) (18, 1.25799455129462) (19, 1.28053146409565) (20, 1.29159255477788) (21, 1.31221866376555) (22, 1.32259403857146) (23, 1.34141355973083) (24, 1.35114142989817)
    };
    \addlegendentry{$c_n(123)/c_n(132)$}

    \addplot[smooth,color=red,mark=o]
        plot coordinates {
            (6, 1.00000000000000) (7, 1.03030303030303) (8, 1.05617977528090) (9, 1.14453125000000) (10, 1.15881561238223) (11, 1.23526765632029) (12, 1.25660825660826) (13, 1.33456588551671) (14, 1.36181228779056) (15, 1.44052148205588) (16, 1.47188853186501) (17, 1.55051006205891) (18, 1.58664893154088) (19, 1.66530323091258) (20, 1.70562322577139) (21, 1.78480056497876) (22, 1.82882255094190) (23, 1.90869592852454) (24, 1.95615355015918)
        };
    \addlegendentry{$c_n(123)/c_n(321)$}

    \addplot[smooth,color=green,mark=diamond]
        plot coordinates {
            (6, 1.00000000000000) (7, 1.03030303030303) (8, 1.02247191011236) (9, 1.06250000000000) (10, 1.05921938088829) (11, 1.09941520467836) (12, 1.10365310365310) (13, 1.14315213215521) (14, 1.15199188803024) (15, 1.19175612474859) (16, 1.20491623392447) (17, 1.24420428681371) (18, 1.26125262618032) (19, 1.30047818238396) (20, 1.32055826697198) (21, 1.36013959735726) (22, 1.38275426745249) (23, 1.42289893722823) (24, 1.44777852774939)
        };
    \addlegendentry{$c_n(132)/c_n(321)$}
    \end{axis}
    \end{tikzpicture}
    \begin{tikzpicture}
    \begin{axis}[
        xlabel=$n$,
        xmin=5, xmax=25,
        ymin=.9, ymax=2.5,
        xtick={5,10,15,20,25},
        ytick={0,.5,1,1.5,2},
        legend pos=south west
        ]
    \addplot[smooth,mark=*,blue] plot coordinates { 
        (6, 2.00000000000000) (7, 2.26666666666667) (8, 2.18604651162791) (9, 2.31620553359684) (10, 2.30213903743316) (11, 2.41512752858399) (12, 2.36633109619687) (13, 2.42822193097422) (14, 2.38954561997062) (15, 2.44289929399775) (16, 2.40537027624323) (17, 2.44000163442662) (18, 2.40216032700127) (19, 2.42620577237709) (20, 2.39077840452060) (21, 2.40708972096117) (22, 2.37266402010130) (23, 2.38219124923146) (24, 2.34873287716982)
    };
    \addlegendentry{$c_n(123)/c_n(312)$}

    \addplot[smooth,color=red,mark=o]
        plot coordinates {
            (6, 2.00000000000000) (7, 2.26666666666667) (8, 2.11627906976744) (9, 2.15019762845850) (10, 2.10427807486631) (11, 2.14951627088830) (12, 2.07829977628635) (13, 2.07994757536042) (14, 2.02137783229320) (15, 2.02103212762285) (16, 1.96908232634470) (17, 1.95797535770568) (18, 1.90951568472945) (19, 1.89468657382077) (20, 1.85103142293334) (21, 1.83436631975175) (22, 1.79394731180252) (23, 1.77588129473619) (24, 1.73833236491522)
        };
    \addlegendentry{$c_n(132)/c_n(312)$}

    \addplot[smooth,color=green,mark=diamond]
        plot coordinates {
            (6, 2.00000000000000) (7, 2.20000000000000) (8, 2.06976744186047) (9, 2.02371541501976) (10, 1.98663101604278) (11, 1.95514511873351) (12, 1.88310961968680) (13, 1.81948449104412) (14, 1.75468061302889) (15, 1.69584370967609) (16, 1.63420681944944) (17, 1.57367674943467) (18, 1.51398351535043) (19, 1.45691530968059) (20, 1.40170371064180) (21, 1.34866033112770) (22, 1.29737246452878) (23, 1.24807268335976) (24, 1.20068942286187)
        };
    \addlegendentry{$c_n(321)/c_n(312)$}
    \end{axis}
    \end{tikzpicture}
    \caption{Here, we plot the ratio $c_n(\sigma)/c_n(\tau)$ for each pairs $\sigma,\tau$ with $c_n(\sigma)\geq c_n(\tau)$ for $5\leq n\leq 24.$ The cases with $\tau=312$ appear on the right.}
    \label{fig:inequalities}
\end{figure}

 B\'{o}na and Cory conjectured the following inequalities for all $n\geq 1$ in \cite{BC2019}:
\[c_n(123)\geq c_n(132)\geq c_n(321)\geq c_n(312).\]
We note that though these all hold for the values in Table~\ref{tab:discussion} and the corresponding OEIS entries, the last inequality becomes closer and closer as $n$ gets larger for the values of $n$ known to us ($n\leq 24$). For example, consider the relationships between these four values as $n$ grows, demonstrated in Figure~\ref{fig:inequalities}.
Though the inequalities $c_n(123)\geq c_n(132)\geq c_n(321)$ seem to be consistent with this data, it seems feasible than the inequalities involving $c_n(312)$ will not hold. In fact, it is possible that there is a large enough $n$ for which $c_n(312)\geq c_n(\sigma)$ for any other $\sigma\in\{123,132,213,321\}.$ If the relationship between $c_n(312)$ and $c_n(321)$ continues as in Figure~\ref{fig:inequalities}, it may even be the case that $c_n(312)>c_n(321)$ for some $n$ close to 28.
It would be interesting to further investigate these inequalities.

\subsection*{Disclaimer}
The views expressed in this article do not necessarily represent 
the views or opinions of the U.S. Naval Academy, Department of the Navy, or Department of Defense or any of its components.

\bibliographystyle{amsplain}

\begin{thebibliography}{99}

\bibitem{AE14} K. Archer and S. Elizalde. Cyclic permutations realized by signed shifts. \textit{J. Comb.}, 5 (2014), 1--30.

\bibitem{Bona} M. B\'{o}na. A Walk Through Combinatorics. World Scientific, Singapore, 4th edition, 2016.

\bibitem{BC2019}  M. B\'{o}na and M. Cory, Cyclic permutations avoiding pairs of patterns of length three. \textit{Discrete Math. Theor. Comput. Sci.} 21 (2019), no. 2, Paper No. 8, 15 pp.

\bibitem{ET19} S. Elizalde and J. Troyka, Exact and asymptotic enumeration of cyclic permutations according to descent set, \textit{Journal of Combinatorial Theory, Series A} 165 (2019), 360--391.


\bibitem{G01} T.~Gannon, The cyclic structure of unimodal permutations, {\em Discrete Math.}, 237 (2001), 149--161.



\bibitem{GLW24} J. Gil, O. Lopez and M. Weiner, A positional statistic for $1324$-avoiding permutations, \textit{Discrete Math. Theor. Comput. Sci.} 26 (2024), Special issues.

\bibitem{H19} B. Huang. An upper bound on the number of (132,213)-avoiding cyclic permutations. \textit{Disc. Math.}, 342(6) (2019), 1762--1771.


\bibitem{K01} C. Krattenthaler, Permutations with restricted patterns and Dyck paths, \textit{Adv. Appl. Math.} 27 (2001), 510--530.


\bibitem{OEIS} OEIS Foundation Inc. (2023), The On-Line Encyclopedia of Integer Sequences, Published electronically at \url{https://oeis.org}.


\bibitem{Sage} Sage Developers, Sagemath, the Sage Mathematics Software System (Version 9.5.0), 2022, \texttt{https://www.sagemath.org}.

\end{thebibliography}

\end{document}